\theoremstyle{plain}
   \newtheorem{theorem}{Theorem}[section]
   \newtheorem{lemma}[theorem]{Lemma}
   \newtheorem{corollary}[theorem]{Corollary}
\theoremstyle{definition}
   \newtheorem{definition}[theorem]{Definition}
   \newtheorem{example}[theorem]{Example}
   \newtheorem{remark}[theorem]{Remark}
\DeclareMathOperator{\MLdeg}{MLdeg}
\title{Maximum likelihood degree of the $\beta$-stochastic blockmodel}
\author{Cashous Bortner\footnote{California State University, Stanislaus}, Jennifer Garbett\footnote{Lenoir-Rhyne University}, Elizabeth Gross\footnote{University of Hawai`i at M\={a}noa}, Christopher McClain\footnote{West Virginia University Institute of Technology},\\ Naomi Krawzik\footnote{Sam Houston State University}, Derek Young\footnote{Mount Holyoke College}}
\begin{document}

\maketitle
\begin{abstract}
    Log-linear exponential random graph models are a specific class of statistical network models that have a log-linear representation. This class includes many stochastic blockmodel variants. In this paper, we focus on $\beta$-stochastic blockmodels, which combine the $\beta$-model with a stochastic blockmodel. Here, using recent results by Almendra-Hern\'{a}ndez, De Loera, and Petrovi\'{c}, which describe a Markov basis for $\beta$-stochastic block model, we give a closed form formula for the maximum likelihood degree of a $\beta$-stochastic blockmodel. The maximum likelihood degree is the number of complex solutions to the likelihood equations.  In the case of the $\beta$-stochastic blockmodel, the maximum likelihood degree factors into a product of Eulerian numbers.
\end{abstract}

\section{Introduction}

\emph{Log-linear models} form a popular class of statistical models used in categorical data analysis that have been studied extensively in algebraic statistics (see for example \cite[Chapter 1]{DSS2009} and \cite[Chapter 9]{sullivant2018}). In part, what makes log-linear models so amenable to algebraic techniques is that they have a monomial parameterization, and thus correspond to toric varieties. Here, we focus on a particular class of log-linear models, namely \emph{log-linear exponential random graph models} (log-linear ERGMs) \cite{gross2024goodnessfitloglinearergms}, \cite[Chapter 11]{sullivant2018}, \cite{gross2021algebraic}. Log-linear ERGMs are statistical network models used to describe relational data (e.g.~when data is in the form of a graph or network). They are exponential families of probability distributions over the space of simple graphs (directed or undirected, depending on model) with $n$ vertices where, in the most common setting, the sufficient statistic is a linear function of the adjacency matrix.

Log-linear ERGMs can be viewed as generative models where different effects govern edge formation. For example, Erd\H{o}s-R\'{e}nyi random graphs are one of the simplest log-linear ERGMs where density is the only governing effect modeled; in this setting, density is controlled with a single parameter, and the corresponding sufficient statistic is the total number of edges. The next simplest log-linear ERGM, the $\beta$-model, includes a parameter for each node that models the expansiveness for each edge, i.e., the propensity to be connected to others. The sufficient statistic for the $\beta$-model is the degree sequence of the network.  Stochastic blockmodels, statistical network models where vertices are placed into groups, or blocks, and the parameters of the model govern within-block and between-block density, also belong to the class of log-linear ERGMs. These models are particularly relevant when modeling homophily, a tendency for vertices with similar attributes to be connected. In this paper, we focus on the subclass of log-linear ERGMs called $\beta$-stochastic blockmodels ($\beta$-SBM), which combines the $\beta$-model with a stochastic blockmodel \cite{Yang2015}, \cite{KPPSARWWY2024}, \cite{gross2024goodnessfitloglinearergms}.

Log-linear ERGMs have been studied in algebraic statistics from multiple angles. The geometry of maximum likelihood estimation for the $\beta$-model and variants are studied in \cite{rinaldo2013} and \cite{stasi2013}. Combinatorial and algebraic methods for goodness-of-fit testing for log-linear ERGMs are studied in \cite{gross2024goodnessfitloglinearergms} and \cite{KPPSARWWY2024}, while their Markov bases have been studied in \cite{ADP2024}. Indeed, the recent paper \cite{ADP2024} shows that the $\beta$-SBM has a quadratic Markov basis, which is a fundamental fact that we draw upon in this work. Here, we continue the investigation of the $\beta$-SBM from an algebraic statistics framework by investigating its maximum likelihood degree. 

The \emph{maximum likelihood degree} (ML degree) of a statistical model is the number of complex solutions to the likelihood equations for a generic data point and is a measure of the algebraic complexity of maximum likelihood estimation \cite{CHKS2006}, \cite{huh2014likelihood}. While statistics is the main application of the ML degree, it is defined with respect to a complex algebraic variety. Indeed, the ML degree considers the optimization problem of likelihood estimation over the \emph{Zariski closure} of the statistical model, thus it can be also defined for an arbitrary algebraic variety. The Zariski closure of a log-linear model, such as the $\beta$-SBM, is a toric variety. Recently, there has been a range of work focusing on the ML degrees of different families of toric varieties. For example, the ML degrees for scaled toric varieties are studied in \cite{ABBGHHNRS2019}, where the authors compute the ML degrees of rational normal scrolls and a large class of Veronese-type varieties. The focus on the ML degrees of toric varieties continues in the literature with: \cite{amendola2024likelihood}, which studies the ML degree when the design matrix corresponds to a reflexive polytope; with \cite{amendola2020maximum}, which studies the ML degree of 2-dimensional Gorenstein toric Fano varieties; with \cite{coons2021quasi}, which classifies the two-way quasi-independence models with ML degree equal to one; with \cite{clarke2023matroid}, which explores how the ML degree drops under different scalings of independence models and models defined by the second hypersimplex; with \cite{brysiewicz2023lawrence}, which studies the ML degrees of hierarchical models and three dimensional quasi-independence models; and with \cite{duarte2021discrete}, which studies the ML degree for staged tree models. This work fits within this body of literature by giving a closed form formula for the ML degree for another family of toric varieties, one coming from statistical network analysis.

The main theorem (Theorem \ref{thm:RecursionTheorem})  of this paper gives a multiplicative formula for the ML degree for a $\beta$-SBM with $k$ blocks with $n_1, \ldots, n_k$ vertices in each block. In particular, if $k>1$ and $N\subseteq [k]=\{1, \ldots, k\}$ is the set of indices for blocks containing more than two vertices, the ML degree for this $\beta$-SBM is 
\[
\MLdeg(n_1, \ldots, n_k) = \prod_{i\in N} (2^{n_i} - n_i-1)
\]
when $N$ is non-empty, and $\MLdeg(n_1,\dots,n_k)=1$ otherwise, i.e. when all blocks have size 1 or 2. Notice that each factor in the product is the Eulerian number $A(n_i, 1)$. When there is only a single block, the $\beta$-SBM collapses to the log-linear model corresponding to the second hypersimplex $\Delta_{2, n_1}$. The ML degree for this case is the Eulerian number $A(n_1 - 1, 1)$ (see Remark 23 in \cite{ABBGHHNRS2019}).

The paper is structured as follows: In \cref{sec:Background} we provide necessary background on the $\beta$-SBM, including the design matrix, the corresponding toric ideal, and the likelihood equations. In \cref{sec:MainThm}, we state the main result, followed by its proof in \cref{sec:MainProof}. The proof relies on the fact that the ML degree factors into the product of the ML degrees of two submodels (\cref{lem:FactoringLemma}); we prove this result in \cref{sec:factoringproof}.

\section{Background}\label{sec:Background}

\subsection{The \texorpdfstring{Log-linear ERGMS and the $\beta$}{beta}-Stochastic Blockmodel} 

In general, an exponential random graph model is a collection of probability distributions on the space of all graphs  on $n$ vertices $\mathcal G_n$ \footnote{Depending on context, sometimes $\mathcal G_n$ represents the space of all directed  graphs on $n$ vertices and sometimes it represents the space of all  undirected graphs on $n$ vertices, with possibly other constraints, such as simple, also specified. For us, in the remainder of the manuscript, $\mathcal G_n$ will be the space of all simple, undirected graphs on $n$ vertices. }  with the following form: 
\[
P_{\theta}(G) = Z(\theta) e^{{\theta} \cdot t(G)}, \ G \in \mathcal G_n, \
\]
where $G$ is represented as a vector in $\mathbb{R}^{\ell}$ (where $\ell$ depends on the types of graphs considered), 
$\theta$ is a row vector of parameters of length $q$, $t$ is a map $t: \mathbb{R}^{\ell} \to \mathbb{R}^q$ 
called the \emph{sufficient statistic}, and $Z(\theta)$ is a normalizing constant. The image of the sufficient statistic $t$ is a vector where each entry is a network statistic, e.g.~edge count, degree of a given vertex,  number of edges in a given block of vertices, etc. When the sufficient statistic is a linear function on the entries of a natural contingency table representation $u$ of the graph, as in degree-based models or stochastic blockmodels, the sufficient statistic map $t$ can be described with a design matrix $A$, and $Au$ replaces $t(G)$ in the expression above.
In this case, we call the model a \emph{log-linear ERGM}. This connection between some statistical network models and log-linear models was first established in \cite{fienberg1981categorical} and \cite{fienberg1985statistical}.

The $\beta$-stochastic blockmodel is a log-linear ERGM that combines the features of the beta model and the stochastic blockmodel \cite{Yang2015}, \cite{KPPSARWWY2024}, \cite{gross2024goodnessfitloglinearergms}; it is also known as the exponential family version of the \emph{degree corrected stochastic block model}. The description of the model begins with a set of vertices and a block assignment of the vertices. In this exposition, we assume that the block assignment is known. This case is useful when testing for effects of homophily in networks. In other applications, such as clustering, the block assignment is treated as a latent variable.

Let $n$ and $k$ be integers such that $n\geq 2$ and $k \geq 1$. Here, $n$ will be the total number of vertices and $k$ the number of blocks. Given positive integers $n_1,n_2,\ldots,n_k$ whose sum is $n$, let $V_i=\{(i,v):1\leq v\leq n_i\}$ for $1\leq i\leq k$. The set $V=\bigcup_{i=1}^k V_i$ is the set of vertices, partitioned by the blocks $V_i$ of sizes $n_i$, respectively. Notice that we are defining the vertex set so that each vertex is an ordered pair where the first entry indicates the block membership of the vertex. While initially cumbersome, this notation will have advantages later on. We denote by $E$ the set of \textit{dyads}, i.e., potential (undirected) edges $\{ (i,v), (j,w) \}$. Formally,
\[
E=\bigcup_{i=1}^{k}\bigcup_{j=i}^{k} E_{i,j}, \quad 
\text {where } \quad
E_{i,j} = E_{j,i} =
\begin{cases} 
   \{\{(i,v),(i,w)\}: 1\leq v<w\leq n_i\}                & \text{if } i=j \\
   \{\{ (i,v)(j,w) \}: 1\leq v\leq n_i, 1\leq w\leq n_j\} & \text{if } i\neq j 
\end{cases},
\]
where $E_{i,i}$ contains potential edges within block $i$ and those between distinct blocks $i$ and $j$ are in $E_{i,j}$.

Following \cite{KPPSARWWY2024} and \cite{gross2024goodnessfitloglinearergms}, the $\beta$-SBM is parametrized by node-specific parameters $\beta_{(i,v)}$ for $1\leq i\leq k$ and $1\leq v\leq n_i$, as in the beta model, and block-specific parameters $\alpha_{i,j}$ for $1\leq i\leq j\leq k$ , as in the stochastic blockmodel. The $\beta$-SBM is a \emph{dyad-independent} model, meaning that the presence or absence of each edge is independent of the presence or absence of any other edge. Thus, we can specify the probability of observing a specific graph by specifying the probabilities of each edge. To this end, we give the log-odds for the probability $p_{(i,v)(j,w)}$ of each dyad $(i,v)(j,w)$ being connected by an edge:
\[
\log\left(\frac{p_{(i,v)(j,w)}}{1-p_{(i,v)(j,w)}}\right)=\beta_{(i,v)}+\beta_{(j,w)}+\alpha_{i,j.}
\]
Alternatively, with an appropriate transformation of parameters,  we can specify a monomial parameterization for the probabilities:
\[
p_{(i,v)(j,w)} = \beta'_{(i,v)}\beta'_{(j,w)}\alpha'_{i,j}.
\]
We use $\mathcal{M}(n_1,n_2,\ldots,n_k)$ to denote the set of all probability distributions on $\mathcal G_n$ that arise from the $\beta$-SBM parameterization where $k$ is the number of vertex blocks and
$n_i$ is the size of the $i$th block.

Log-linear ERGMs are defined in terms of their sufficient statistic, which is related to the design matrix $A$. In particular, if we determine the probability of observing a given graph $G \in \mathcal G_n$ by multiplying the probabilities of observing each dyad in the graph, then the exponents on the $\beta'$ and $\alpha'$ will correspond to the entries of the sufficient statistic of $G$ for the model. In this case, the entries of the sufficient statistic will be elements of the degree sequence (e.g.~the exponent on $\beta'_{(1,1)}$ is the degree of vertex $(1,1)$ in $G$) and the numbers of edges within blocks and between block pairs. For a graph $G$ on vertex set $V$, let $d_{(i,v)}$ be the degree of the vertex $(i,v)$ in $G$, i.e., the number of edges in $G$ which are incident with the vertex $(i,v)$. More formally, the vector of sufficient statistics for $G$ is
\begin{equation}\label{eqn:SufficientStatistic}
    t(G)=\left (d_{(1,1)},d_{(1,2)}\dots,d_{(k,n_k)},|E_{1,1}|,|E_{1,2}|,\dots,|E_{1,k}|, |E_{2,2}|, |E_{2,3}|, \ldots, |E_{k,k}|\right )
\end{equation}
in which the vertices are ordered lexicographically and the block pairs are also ordered lexicographically.

\begin{example}
\cref{fig:ModelLabeling} shows a graph $G$ containing $3$ blocks of sizes $3$, $4$, and $3$ with some edges between the blocks. The sufficient statistic $t(G)$ for the graph is  $(2,4,3,4,4,3,2,3,3,2,2,3,2,3,4,1)$.
\end{example}

\begin{figure}[htpb]
\centering
\begin{tikzpicture}[
single/.style={circle,draw=black,fill=black},
block/.style={oval,thick},
scale=0.8,
fill opacity=0.7,
]

\draw (5.0,5.50) node {Block $1$};
\draw[fill=blue,fill opacity = 0.5] (5,4) ellipse (1.7cm and 1.0cm){};
\node[single] at (4.0,3.8) (11){};
\node[] at (4.0,4.3) {$(1,1)$};
\node[single] at (5.0,3.8) (12){};
\node[] at (5.0,4.3) {$(1,2)$};
\node[single] at (6.0,3.8) (13){};
\node[] at (6.0,4.3) {$(1,3)$};

\draw (1.5,2.25) node {Block $2$};
\draw[fill=green,fill opacity = 0.5] (1.5,0) ellipse (1.3cm and 1.7cm){};
\node[single] at (1.8,1.0) (21){};
\node[] at (1.0,1.0) {$(2,1)$};
\node[single] at (1.8,0.4) (22){};
\node[] at (1.0,0.4) {$(2,2)$};
\node[single] at (1.8,-0.2) (23){};
\node[] at (1.0,-0.2) {$(2,3)$};
\node[single] at (1.8,-0.8) (24){};
\node[] at (1.0,-0.8) {$(2,4)$};

\draw (8.5,2.25) node {Block $3$};
\draw[fill=red,fill opacity = 0.5] (8.5,0) ellipse (1.3cm and 1.7cm){};
\node[single] at (8.2,0.8) (31){};
\node[] at (9.0,0.8) {$(3,1)$};
\node[single] at (8.2,0.0) (32){};
\node[] at (9.0,0.0) {$(3,2)$};
\node[single] at (8.2,-0.8) (33){};
\node[] at (9.0,-0.8) {$(3,3)$};

\draw[-] (11) -- (12) -- (13);
\draw[-] (12) -- (33);
\draw[-] (13) -- (31);

\draw[-] (21) -- (11);
\draw[-] (21) -- (22);
\draw[-] (21) -- (12);
\draw[-] (21) -- (32);

\draw[-] (22) -- (13);
\draw[-] (22) -- (23);
\draw[-] (22) -- (32);

\draw[-] (23) -- (24);
\draw[-] (23) -- (33);

\draw[-] (24) -- (31);

\draw[-] (31) -- (32);

\end{tikzpicture}
\caption{A graph with three blocks of sizes $3,4,$ and $3$.}%
\label{fig:ModelLabeling}
\end{figure}
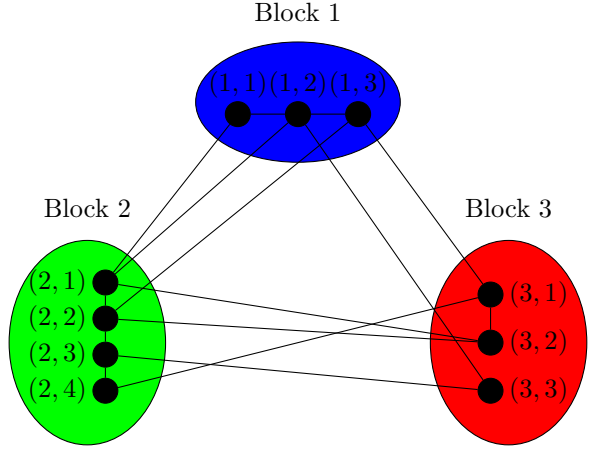

\subsection{The Design Matrix and Likelihood Equations}

As described in \cite{gross2024goodnessfitloglinearergms} and \cite{KPPSARWWY2024}, the $\beta$-SBM can be specified as a log-linear model. The contingency table representation of the model including table dimensions and marginals is given in \cite{gross2024goodnessfitloglinearergms}. For a log-linear model, the defining marginals, which correspond to the sufficient statistic, are described by a design matrix $A$. We begin with describing the design matrix for the $\beta$-SBM.

The \textit{design matrix} $A$ associated with the $\beta$-stochastic blockmodel $\mathcal{M}(n_1,n_2,\ldots , n_k)$ is a $\big( n + k +  \binom{k}{2} \big) \times |E|$ zero-one matrix with entries as follows. We index the columns by the variables $p_{(i,v)(j,w)}$ with $(i,v)(j,w)\in E$, and we index the rows by the parameters $\beta_{(\ell,x)}$ for each $(\ell,x) \in V$ and $\alpha_{\ell_1,\ell_2}$ for each combination of blocks $\{\ell_1,\ell_2\} \subseteq [k]$ where it may be the case $\ell_1=\ell_2$. The entry in row $\beta_{(\ell,x)}$ and column $p_{(i,v)(j,w)}$ equals 1 if either $(\ell,x) = (i,v)$ or $(\ell,x)=(j,w)$, and equals 0 otherwise, and the entry in row $\alpha_{\ell_1,\ell_2}$ and column $p_{(i,v)(j,w)}$ equals 1 if $\{ \ell_1,\ell_2\} = \{i,j\}$ and equals 0 otherwise. The $\beta$ rows are ordered lexicographically, followed by the $\alpha$ rows, also ordered lexicographically. 

\begin{example}
Consider the model $\mathcal{M}(3,2)$, i.e.,~the model containing blocks $V_1=\{(1,1),(1,2),(1,3)\}$ and $V_2=\{(2,1),(2,2)\}$. This model is specified by $\binom{3+2}{2}=10$ edge probabilities. The design matrix corresponding to this model is provided here.

{\footnotesize
\[
    \NiceMatrixOptions{code-for-first-row = \color{blue},
code-for-first-col = \color{blue}}
\begin{bNiceArray}{CCCCCCCCCC}[first-row,first-col]
& \scriptstyle{p_{(1,1)(1,2)}} & \scriptstyle{p_{(1,1)(1,3)}} & \scriptstyle{p_{(1,1)(2,1)}} & \scriptstyle{p_{(1,1)(2,2)}} & \scriptstyle{p_{(1,2)(1,3)}} & \scriptstyle{p_{(1,2)(2,1)}} & \scriptstyle{p_{(1,2)(2,2)}} & \scriptstyle{p_{(1,3)(2,1)}} & \scriptstyle{p_{(1,3)(2,2)}} & \scriptstyle{p_{(2,1)(2,2)}}\\
\scriptstyle{\beta_{(1,1)}} & 1 & 1 & 1 & 1 & 0 & 0 & 0 & 0 & 0 & 0  \\
\scriptstyle{\beta_{(1,2)}} & 1 & 0 & 0 & 0 & 1 & 1 & 1 & 0 & 0 & 0  \\
\scriptstyle{\beta_{(1,3)}} & 0 & 1 & 0 & 0 & 1 & 0 & 0 & 1 & 1 & 0  \\
\scriptstyle{\beta_{(2,1)}} & 0 & 0 & 1 & 0 & 0 & 1 & 0 & 1 & 0 & 1  \\
\scriptstyle{\beta_{(2,2)}} & 0 & 0 & 0 & 1 & 0 & 0 & 1 & 0 & 1 & 1  \\
\hline
\scriptstyle{\alpha_{1,1}} & 1 & 1 & 0 & 0 & 1 & 0 & 0 & 0 & 0 & 0  \\
\scriptstyle{\alpha_{1,2}} & 0 & 0 & 1 & 1 & 0 & 1 & 1 & 1 & 1 & 0  \\
\scriptstyle{\alpha_{2,2}} & 0 & 0 & 0 & 0 & 0 & 0 & 0 & 0 & 0 & 1  \\
\end{bNiceArray}
\]}
Note that the upper $5 \times 10$ submatrix is the vertex-edge incidence matrix of the complete graph on 5 vertices. This submatrix is the design matrix for the second hypersimplex $\Delta_{2, 5}$.
\end{example}

For a log-linear model given by a design matrix $A$, we denote by $V(A)$ the Zariski closure of the model $\mathcal{M}_A$, and we denote by $I_A:= I(V(A))$ the defining ideal of $V(A)$.  For log-linear models, the ideal $I_A$ is a toric ideal generated by binomials (see \cite{aoki2012markov}, \cite{DSS2009}, \cite{sullivant2018}).  Furthermore, for log-linear models, given an observation $\mathbf{u}$, the maximum likelihood estimate $\hat{\textbf{p}}$ is the unique non-negative solution of the system of polynomial equations given by
\[
A{\textbf{p}}=A\textbf{u} \text{ and } \textbf{p} \in V(A).
\]

For $\beta$-stochastic block models, a generating set for the ideal $I_A$ is given in  \cite{ADP2024}.  In particular, the authors of \cite{ADP2024} show that $I_A$ is generated by quadratics. Each quadratic binomial in $I_A$ for a $\beta$-SBM corresponds to a Markov move, i.e. a move between two graphs with the same sufficient statistic. These moves exchange one nonadjacent pair of edges for another pair of edges on the same four distinct vertices. The quadratic equations obtained by setting each element of the generating set equal to zero, together with the linear equations $A{\textbf{p}}=A\textbf{u}$, constitute the \textit{likelihood equations}, (see \cite{HKS2005}, \cite{DSS2009},\cite{sullivant2018}). These quadratic equations are categorized in \cref{tab:binomials} with names that we will use to refer to both the equations and their corresponding binomials, as needed. \cref{tab:binomials} also shows the graph moves for these equations, using colors to emphasize the distinction between different blocks.

\begin{table}[ht]
\centering
\caption{Quadratic likelihood equations and their corresponding graph moves}
\label{tab:binomials}

\medskip
\begin{tabular}{ccc}
\textbf{Name} & \textbf{Equation Structure} & \textbf{Graph Structure}
\\
\hline
\begin{tabular}{c}
     within  \\
    block
\end{tabular}  & {\begin{minipage}{1.8in}\begin{align*}&p_{(i,t)(i,u)}p_{(i,v)(i,w)}& \\ &-p_{(i,t)(i,v)}p_{(i,u)(i,w)}=0\end{align*}\end{minipage}} & \begin{tabular}{c@{\hskip 0.1in}c}
\begin{tikzpicture}[
scale=0.8,
fill opacity=0.7,
single/.style={circle,draw=black,fill=black},
block/.style={oval,draw=blue!50,fill=blue!20,thick},
]
\small

\node[] at (0,1.75) {};

\draw[fill=blue, fill opacity=0.2] (1.5,0) ellipse (1.7cm and 1.7cm){};

\node[single] at (1,0.5) (a){};
\node[] at (0.8,1.0) {$(i,t)$};
\node[single] at (1,-0.5) (b){};
\node[] at (0.8,-1.0) {$(i,u)$};
\node[single] at (2,0.5) (c){};
\node[] at (2.2,1.0) {$(i,v)$};
\node[single] at (2,-0.5) (d){};
\node[] at (2.2,-1.0) {$(i,w)$};

\draw[-] (a) -- (b);
\draw[-] (c) -- (d);

\node[font=\huge] at (4.3,0) {$\Longleftrightarrow$};

\draw[fill=blue, fill opacity=0.2] (7.1,0) ellipse (1.7cm and 1.7cm){};

\node[single] at (6.6,0.5) (a){};
\node[] at (6.4,1.0) {$(i,t)$};
\node[single] at (6.6,-0.5) (b){};
\node[] at (6.4,-1.0) {$(i,u)$};
\node[single] at (7.6,0.5) (c){};
\node[] at (7.8,1.0) {$(i,v)$};
\node[single] at (7.6,-0.5) (d){};
\node[] at (7.8,-1.0) {$(i,w)$};

\draw[-,thick] (a) -- (c);
\draw[-,thick] (b) -- (d);

\end{tikzpicture}
\end{tabular} \\
\\
3-1  & {\begin{minipage}{1.8in}\begin{align*}&p_{(i,t)(j,w)}p_{(i,u)(i,v)}&\\ &-p_{(i,t)(i,u)}p_{(i,v)(j,w)}=0\end{align*}\end{minipage}} & \begin{tabular}{r@{\hskip 0.1in}l}
\begin{tikzpicture}[
scale=0.95,
fill opacity=0.7,
single/.style={circle,draw=black,fill=black},
block/.style={oval,draw=blue!50,fill=blue!20,thick},
]
\small
\node[] at (0,1.3) {};

\draw[fill=blue, fill opacity=0.2] (0,0) ellipse (.8cm and 1.3cm){};
\draw[fill=green, fill opacity=0.2](1.8,0)  ellipse (.8cm and 1.3cm){};

\node[single] at (0.3,0) (a){};
\node[] at (-0.3,0.7) {$(i,t)$};
\node[single] at (0.3,0.7 ) (b){};
\node[] at (-0.3,0.0) {$(i,u)$};
\node[single] at (0.3,-0.7) (c){};
\node[] at (-0.3,-0.7) {$(i,v)$};
\node[single] at (1.5,0) (d){};
\node[] at (2.1,0) {$(j,w)$};

\draw[-,thick] (a) -- (c);
\draw[-,thick] (b) -- (d);

\node[font=\huge] at (3.5,0) {$\Longleftrightarrow$};

\draw[fill=blue, fill opacity=0.2] (5.2,0) ellipse (.8cm and 1.3cm){};
\draw[fill=green, fill opacity=0.2](7,0)  ellipse (.8cm and 1.3cm){};

\node[single] at (5.5,0) (a){};
\node[] at (4.9,0.7) {$(i,t)$};
\node[single] at (5.5,0.7) (b){};
\node[] at (4.9,0.0) {$(i,u)$};
\node[single] at (5.5,-0.7) (c){};
\node[] at (4.9,-0.7) {$(i,v)$};
\node[single] at (6.7,0) (d){};
\node[] at (7.3,0) {$(j,w)$};

\draw[-,thick] (a) -- (b);
\draw[-,thick] (c) -- (d);
\end{tikzpicture}
\end{tabular}
 \\
\\
2-2  & {\begin{minipage}{1.8in}\begin{align*}&p_{(i,t)(j,v)}p_{(i,u)(j,w)}&\\ &-p_{(i,t)(j,w)}p_{(i,u)(j,v)}=0\end{align*}\end{minipage}} & \begin{tabular}{l@{\hskip 0.1in}l}
\begin{tikzpicture}[
scale=0.95,
fill opacity=0.7,
single/.style={circle,draw=black,fill=black},
block/.style={oval,draw=blue!50,fill=blue!20,thick},
]
\small

\node[] at (0,1.3) {};

\draw[fill=blue, fill opacity=0.2](0,0)  ellipse (.8cm and 1.3cm){};
\draw[fill=green, fill opacity=0.2](1.8,0)  ellipse (.8cm and 1.3cm){};

\node[single] at (0.3,0.5) (a){};
\node[] at (-0.3,0.5) {$(i,t)$};
\node[single] at (0.3,-0.5) (b){};
\node[] at (-0.3,-0.5) {$(i,u)$};
\node[single] at (1.5,0.5) (c){};
\node[] at (2.1,0.5) {$(j,v)$};
\node[single] at (1.5,-0.5) (d){};
\node[] at (2.1,-0.5) {$(j,w)$};

\draw[-,thick] (a) -- (c);
\draw[-,thick] (b) -- (d);

\node[font=\huge] at (3.5,0) {$\Longleftrightarrow$};

\draw[fill=blue, fill opacity=0.2](5.2,0)  ellipse (.8cm and 1.3cm){};
\draw[fill=green, fill opacity=0.2](7.0,0)  ellipse (.8cm and 1.3cm){};

\node[single] at (5.5,0.5) (a){};
\node[] at (4.9,0.5) {$(i,t)$};
\node[single] at (5.5,-0.5) (b){};
\node[] at (4.9,-0.5) {$(i,u)$};
\node[single] at (6.7,0.5) (c){};
\node[] at (7.3,0.5) {$(j,v)$};
\node[single] at (6.7,-0.5) (d){};
\node[] at (7.3,-0.5) {$(j,w)$};

\draw[-,thick] (a) -- (d);
\draw[-,thick] (b) -- (c);
\end{tikzpicture}
\end{tabular} \\
\\
2-1-1  & {\begin{minipage}{1.8in}\begin{align*}&p_{(i,t)(j,v)}p_{(i,u)(k,w)}&\\ &-p_{(i,t)(k,w)}p_{(i,u)(j,v)}=0\end{align*}\end{minipage}} & \begin{tabular}{l@{\hskip 0.1in}l}
\begin{tikzpicture}[
fill opacity=0.7,
scale=0.95,
single/.style={circle,draw=black,fill=black},
block/.style={oval,draw=blue!50,fill=blue!20,thick},
]
\small

\node[] at (0,1.4) {};

\draw[fill=blue, fill opacity=0.2](0,0)  ellipse (.8cm and 1.3cm){};
\draw[fill=green, fill opacity=0.2](1.8,0.75)  ellipse (.8cm and 0.6cm){};
\draw[fill=red, fill opacity=0.2](1.8,-0.75)  ellipse (.8cm and 0.6cm){};

\node[single] at (0.3,0.5) (a){};
\node[] at (-0.3,0.5) {$(i,t)$};
\node[single] at (0.3,-0.5) (b){};
\node[] at (-0.3,-0.5) {$(i,u)$};
\node[single] at (1.8,0.5) (c){};
\node[] at (1.8,1.0) {$(j,v)$};
\node[single] at (1.8,-0.5) (d){};
\node[] at (1.8,-1.0) {$(k,w)$};

\draw[-,thick] (a) -- (c);
\draw[-,thick] (b) -- (d);

\node[font=\huge] at (3.5,0) {$\Longleftrightarrow$};
\draw[fill=blue, fill opacity=0.2](5.2,0)  ellipse (.8cm and 1.3cm){};
\draw[fill=green, fill opacity=0.2](7.0,0.75)  ellipse (.8cm and 0.6cm){};
\draw[fill=red, fill opacity=0.2](7.0,-0.75)  ellipse (.8cm and 0.6cm){};

\node[single] at (5.5,0.5) (a){};
\node[] at (4.9,0.5) {$(i,t)$};
\node[single] at (5.5,-0.5) (b){};
\node[] at (4.9,-0.5) {$(i,u)$};
\node[single] at (7,0.5) (c){};
\node[] at (7,1.0) {$(j,v)$};
\node[single] at (7,-0.5) (d){};
\node[] at (7,-1.0) {$(k,w)$};

\draw[-, thick] (a) -- (d);
\draw[-, thick] (b) -- (c);

\end{tikzpicture}
\end{tabular} \\

\end{tabular}

\end{table}

\begin{example}
Consider the model $\mathcal{M}(4,2,1)$. The design matrix has 7 $\beta$-rows and 6 $\alpha$-rows, resulting in 13 linear equations. There are 3 within block quadratic equations, all within block 1, including, e.g. $p_{(1,1)(1,2)}p_{(1,3)(1,4)} -p_{(1,1)(1,3)}p_{(1,2)(1,4)}= 0$. There are 36 equations of the form 3-1, including, e.g. the equation $p_{(1,1)(1,2)}p_{(1,3)(2,1)} - p_{(1,1)(1,3)}p_{(1,2)(2,1)}=0$. There are 6 equations of the form 2-2, including, e.g. the equation $p_{(1,1)(2,1)}p_{(1,2)(2,2)} - p_{(1,1)(2,2)}p_{(1,2)(2,1)}=0$. Finally, there are 16 equations of the form 2-1-1, including, e.g. $p_{(1,1)(2,1)}p_{(2,2)(3,1)} - p_{(1,1)(2,2)}p_{(2,1)(3,1)} = 0$. In total, there are 13 linear and 61 quadratic likelihood equations for this model.
\end{example}

Let $M$ be a $\beta$-SBM $\mathcal{M}(n_1,n_2,\ldots,n_k)$ and let $\textbf{u} = \big( u_{(i,v)(j,w)} \big) \in\mathbb{C}^{|E|}$ be a generic point. Then we will denote by $L(M)$ the system of likelihood equations consisting of the linear equations $A(\textbf{p}-\textbf{u})=\textbf{0}$ and the quadratic equations described in \cref{tab:binomials}. The number of solutions $\textbf{p} = \big( p_{(i,v)(j,w)} \big) \in\mathbb{C}^{|E|}$ to the system $L(M)$ is known as the \textit{maximum likelihood degree} of the model \cite{HKS2005}, \cite{DSS2009}, \cite{sullivant2018} which we denote by $\MLdeg(M)$. 

\begin{lemma}\label{lem:ReorderBlocks}
   Let $k$ be a positive integer and $\tau:[k]\rightarrow[k]$ be a permutation of $[k]$. If $M_1=\mathcal{M}(n_1,n_2,\ldots,n_k)$ and $M_2=\mathcal{M}(n_{\tau(1)},n_{\tau(2)},\ldots,n_{\tau(k)})$ are $\beta$-SBMs then $\MLdeg(M_1)=\MLdeg(M_2)$.
\end{lemma}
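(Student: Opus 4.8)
\section*{Proof proposal for \texorpdfstring{\cref{lem:ReorderBlocks}}{Lemma}}

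The plan is to realize the permutation $\tau$ of the blocks as a relabeling of the vertex set that induces a coordinate permutation of $\mathbb{C}^{|E|}$, and then to show that this coordinate permutation carries the entire system $L(M_1)$ onto $L(M_2)$, so that it restricts to a bijection between their solution sets.

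First I would construct the relabeling. Write the vertex set of $M_1$ as $V=\bigcup_{i=1}^k V_i$ with $|V_i|=n_i$, and the vertex set of $M_2$ as $V'=\bigcup_{i=1}^k V_i'$ with $|V_i'|=n_{\tau(i)}$. Since block $\tau(i)$ of $M_1$ and block $i$ of $M_2$ have the same size $n_{\tau(i)}$, the assignment $\varphi(i,v)=(\tau(i),v)$ is a well-defined bijection $V'\to V$ mapping block $i$ of $M_2$ onto block $\tau(i)$ of $M_1$. It induces a bijection $\Phi$ on dyads via $\Phi\big(\{(i,v),(j,w)\}\big)=\{(\tau(i),v),(\tau(j),w)\}$, and hence a permutation matrix $Q$ acting on $\mathbb{C}^{|E|}$ by relabeling the coordinate $p_{(i,v)(j,w)}$ according to $\Phi$.

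Next I would compare the two systems under $Q$. Applying $\varphi$ sends each $\beta$-row of the design matrix of $M_2$ to a $\beta$-row of the design matrix of $M_1$, and each $\alpha$-row indexed by $\{i,j\}$ to the $\alpha$-row indexed by $\{\tau(i),\tau(j)\}$; a direct check of the defining entries shows that the design matrices of $M_1$ and $M_2$ coincide up to this row permutation and the column permutation $Q$. Consequently the linear systems match: $A_2(\mathbf p-\mathbf u)=\mathbf 0$ holds if and only if $A_1(Q\mathbf p-Q\mathbf u)=\mathbf 0$. For the quadratic part, the four combinatorial families in \cref{tab:binomials} (within block, 3--1, 2--2, 2--1--1) depend only on the multiset of block memberships of the four vertices involved, so $\Phi$ permutes these families among themselves; hence $Q$ maps the generating set of $I_{A_1}$ from \cite{ADP2024} bijectively onto that of $I_{A_2}$, equivalently $V(A_2)=Q\cdot V(A_1)$. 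Thus $\mathbf p$ satisfies the quadratic equations of $L(M_2)$ exactly when $Q\mathbf p$ satisfies those of $L(M_1)$.

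Finally I would count solutions. Combining the two previous points, $\mathbf p$ is a solution of $L(M_2)$ with data $\mathbf u$ if and only if $Q\mathbf p$ is a solution of $L(M_1)$ with data $Q\mathbf u$, so $\mathbf p\mapsto Q\mathbf p$ is a bijection between the corresponding solution sets. Since $Q$ is a linear automorphism of $\mathbb{C}^{|E|}$ defined over $\mathbb{Q}$, it carries the Zariski-open dense set of generic data for $M_1$ onto a Zariski-open dense set of generic data for $M_2$; taking $\mathbf u$ generic therefore yields $\MLdeg(M_2)=\MLdeg(M_1)$. I do not anticipate a genuine obstacle here; the only point that needs explicit justification is that the quadratic generators from \cite{ADP2024} are block-symmetric, which is transparent from their description in \cref{tab:binomials}.
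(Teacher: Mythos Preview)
Your proposal is correct and follows exactly the same idea as the paper's proof, which is the one-line observation that the solutions of $L(M_1)$ and $L(M_2)$ differ only by a permutation of coordinates and are therefore equicardinal. You have simply made explicit the relabeling map, the induced column/row permutations on the design matrices, and the block-symmetry of the quadratic generators from \cref{tab:binomials}, all of which the paper leaves implicit.
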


\begin{proof}
   The solutions of $L(M_1)$ and $L(M_2)$ differ only by a permutation of coordinates and so the two sets of solutions are equicardinal.
\end{proof}

Throughout the remainder of this manuscript, will use the notation $\MLdeg(n_1,n_2,\ldots,n_k)$ to mean $\MLdeg(M)$, where $M$ is a $\beta$-SBM $\mathcal{M}(n_1,n_2,\ldots,n_k)$. By \cref{lem:ReorderBlocks}, $\MLdeg(n_1,n_2,\ldots,n_k)$ is well-defined.

\section{The Main Theorem}\label{sec:MainThm}

In this section, we state the main theorem, give an example of its application, and state three immediate corollaries. We will prove the main theorem in Section \ref{sec:MainProof}.

\begin{theorem}[The Maximum Likelihood Degree of $\beta$-SBMs] \label{thm:RecursionTheorem}
   Given positive integers $n,k,n_1,n_2,\dots, n_k$ such that $n,k>1$,
   \begin{enumerate}
      \item $\MLdeg(n)=
         \begin{cases}
                    1 & n=2\\
            2^{n-1}-n & n>2
         \end{cases}$,
      \item $\MLdeg(n_1,1)=\MLdeg(n_1+1)$, and
      \item $\MLdeg(n_1,n_2,\dots,n_k)=\displaystyle\prod_{i \in [k]}\MLdeg(n_i,1)$.
   \end{enumerate}
\end{theorem}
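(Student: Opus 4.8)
The plan is to prove the three parts in order, since part~(3) will rely on part~(2), and the \cref{lem:FactoringLemma} mentioned in the introduction (which factors $\MLdeg$ over a partition of the model into two submodels) will be the workhorse. Before any of that, I would fix notation: for a generic observation $\mathbf{u}$, $L(M)$ consists of the linear constraints $A(\mathbf{p}-\mathbf{u})=\mathbf 0$ together with the quadratic binomials of \cref{tab:binomials}, and $\MLdeg(M)$ counts solutions $\mathbf p\in\mathbb C^{|E|}$. Throughout I would use that $V(A)$ is a toric variety, so that by the standard theory (Birch's theorem and its algebraic counterpart, e.g.\ \cite{HKS2005, DSS2009}) the ML degree is well-defined and invariant under generic rescaling of $\mathbf u$; I would also freely use \cref{lem:ReorderBlocks} to permute blocks.

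For part~(1), the single-block case $\mathcal M(n)$, I would invoke the identification already flagged in the excerpt: with one block the $\alpha$-row is redundant (it is the all-ones vector, a linear combination of the $\beta$-rows), so $V(A)$ is exactly the toric variety of the second hypersimplex $\Delta_{2,n}$, whose design matrix is the vertex–edge incidence matrix of $K_n$. The ML degree of that model is computed in \cite{ABBGHHNRS2019} (Remark~23 there) to be the Eulerian number $A(n-1,1)=2^{n-1}-n$ for $n>2$, and trivially $1$ when $n=2$ (a single edge, one parameter, the MLE is forced). So part~(1) is essentially a citation plus the observation that the $\alpha$-row does not change $V(A)$ or the fiber; the only thing to check carefully is that adjoining a redundant linear equation to $L(M)$ does not alter the solution count, which is immediate because $A\mathbf p=A\mathbf u$ already pins down the value of that marginal.

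For part~(2), I want $\MLdeg(n_1,1)=\MLdeg(n_1+1)$. The model $\mathcal M(n_1,1)$ has a block of size $n_1$ and a singleton block $\{(2,1)\}$; the singleton contributes no within-block dyads and no $\beta$-parameter that behaves differently from an ordinary vertex of the $(n_1+1)$-vertex complete graph. Concretely I would build an explicit bijection on the level of likelihood systems: map the edge variables of $\mathcal M(n_1,1)$ to those of $\mathcal M(n_1+1)$ by treating $(2,1)$ as the $(n_1+1)$-st vertex of a single block, and check that (i) the $\beta$-rows match up exactly, (ii) the two $\alpha$-rows $\alpha_{1,1},\alpha_{1,2}$ of $\mathcal M(n_1,1)$ together span the same space, modulo the $\beta$-rows, as the single $\alpha_{1,1}$ of $\mathcal M(n_1+1)$ (their sum is the all-ones vector on the $n_1+1$-clique minus nothing — one has to be a little careful here, since $\alpha_{1,1}$ of $\mathcal M(n_1,1)$ only covers the $\binom{n_1}{2}$ intra-block dyads and $\alpha_{1,2}$ covers the $n_1$ cross dyads, and their sum is the all-ones vector, which is again redundant given the $\beta$-rows), and (iii) the quadratic binomials correspond: the ``within block'', ``3-1'', and ``2-2'' moves of $\mathcal M(n_1,1)$ are exactly the second-hypersimplex moves on $K_{n_1+1}$ (there is no ``2-1-1'' move since $k=2$), and conversely. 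Once the two systems $L(\mathcal M(n_1,1))$ and $L(\mathcal M(n_1+1))$ are seen to be the same system after relabeling coordinates and discarding one redundant linear equation on each side, the solution counts coincide.

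Part~(3) is where \cref{lem:FactoringLemma} does the real work. I would argue by induction on $k$. The base case $k=1$ is part~(1) combined with part~(2) read backwards: $\MLdeg(n_1)=\MLdeg(n_1-1,1)$ is not quite the statement; rather for $k=1$ the claim $\MLdeg(n_1,n_2,\dots,n_k)=\prod\MLdeg(n_i,1)$ should be interpreted for $k\ge 1$ with the convention that the single-block formula is handled by part~(1)/(2). For the inductive step with $k\ge 2$, I would apply \cref{lem:FactoringLemma} to split $\mathcal M(n_1,\dots,n_k)$ into the submodel on block $k$ together with its interactions with the rest, versus the submodel on blocks $1,\dots,k-1$; the factoring lemma gives $\MLdeg(n_1,\dots,n_k)=\MLdeg(n_1,\dots,n_{k-1},?)\cdot(\text{something})$. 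The cleanest route is to peel off one block at a time: show $\MLdeg(n_1,\dots,n_k)=\MLdeg(n_k,1)\cdot\MLdeg(n_1,\dots,n_{k-1},1)$, or some such, and then iterate. I would need to check that the ``merged'' block (all of blocks $1,\dots,k-1$ lumped together, whose internal structure does not matter because the factoring lemma only sees it through its interactions) behaves like a singleton, so that the induction hypothesis applies to $\mathcal M(n_1,\dots,n_{k-1})$ on its own. Then $\MLdeg(n_1,\dots,n_k)=\MLdeg(n_k,1)\cdot\prod_{i=1}^{k-1}\MLdeg(n_i,1)=\prod_{i=1}^{k}\MLdeg(n_i,1)$, closing the induction.

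The main obstacle, I expect, is \cref{lem:FactoringLemma} itself — i.e.\ verifying that the likelihood system genuinely decouples across the chosen bipartition of blocks, so that its complex solution set is a product. This is delicate because the quadratic binomials of type ``2-1-1'' link three different blocks, so a naive bipartition does not obviously separate the equations; one has to check that the cross-bipartition quadratics are consequences of the linear equations $A\mathbf p=A\mathbf u$ together with the intra-part quadratics, i.e.\ that the toric fiber factors as a product of fibers. (This is presumably why the excerpt defers \cref{lem:FactoringLemma} to its own section.) The second most delicate point is the ``redundant $\alpha$-row'' bookkeeping in parts~(1) and~(2): one must confirm that collapsing or merging blocks changes the design matrix only by rows that lie in the span of the surviving rows, so that neither the variety $V(A)$ nor the marginal fiber over a generic $\mathbf u$ is affected, and hence the ML degree is unchanged.
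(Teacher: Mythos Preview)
Your overall strategy matches the paper's: Part~1 reduces to the second hypersimplex via redundancy of the $\alpha$-row and cites \cite{ABBGHHNRS2019}; Part~2 is a relabeling argument showing the two likelihood systems coincide after row operations on the design matrix; Part~3 is induction driven by \cref{lem:FactoringLemma}. Two points need correction.

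First, a small slip in Part~2: in $\mathcal M(n_1,1)$ there are \emph{no} 2--2 quadratics, since block~2 has only one vertex. The only quadratics are within-block moves inside block~1 and 3--1 moves with the singleton $(2,1)$. The paper's correspondence is precisely that the within-block quadratics of $\mathcal M(n_1+1)$ involving vertex $(1,n_1+1)$ match the 3--1 quadratics of $\mathcal M(n_1,1)$ involving $(2,1)$, while the remaining within-block quadratics coincide on the nose. Your $\alpha$-row bookkeeping is right in spirit; the paper makes it explicit by exhibiting the row operations taking $A$ to $A'$ (in particular $\alpha_{2,2}$ is the zero row).

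Second, and more substantively, your induction variable in Part~3 is wrong. The Factoring Lemma gives
\[
\MLdeg(n_1,\dots,n_k)=\MLdeg(n_1,\dots,n_{k-1},1)\cdot\MLdeg(n_k,1),
\]
and the first factor still has $k$ blocks, so induction on $k$ does not close. Nor can you pass from $\MLdeg(n_1,\dots,n_{k-1},1)$ to $\MLdeg(n_1,\dots,n_{k-1})$: these are genuinely different numbers (e.g.\ $\MLdeg(3)=1$ but $\MLdeg(3,1)=4$). The paper instead inducts on $m=|\{i:n_i>1\}|$, the number of non-singleton blocks. The base case $m=0$ (all blocks singletons) is handled directly: each $\alpha_{i,j}$ row has a single~1, so $A\mathbf p=A\mathbf u$ forces $\mathbf p=\mathbf u$ and the ML degree is~1. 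For the step, reorder so $n_k>1$, apply the Factoring Lemma, and observe that $(n_1,\dots,n_{k-1},1)$ has only $m-1$ non-singleton blocks, so the induction hypothesis applies (with the harmless extra factor $\MLdeg(1,1)=1$). Your ``iterate'' instinct is correct, but the bookkeeping only works once you track $m$ rather than $k$.
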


\begin{example} Consider the model $\mathcal{M}(5,3,1,6,1,2)$ consisting of six blocks with 5, 3, 1, 6, 1, and 2 vertices respectively. Using \cref{thm:RecursionTheorem}, we can calculate the ML degree of $\mathcal{M}(5,3,1,6,1,2)$ as:
   \begin{align*}
      \MLdeg(5,3,1,6,1,2)&=\MLdeg(5,1)\MLdeg(3,1)\MLdeg(1,1)\MLdeg(6,1)\MLdeg(1,1)\MLdeg(2,1)\\
                     &= \MLdeg(6)\MLdeg(4)\MLdeg(2)\MLdeg(7)\MLdeg(2)\MLdeg(3) \\
                     &=(2^5-6)(2^3-4)(1)(2^6-7)(1)(2^2-3)=5928.
   \end{align*}
\end{example}

\begin{remark}
   Note that $\MLdeg(n) = A(n-1, 1)$ and $\MLdeg(n,1)=A(n,1)$, where $A(n-1, 1)$ and $A(n,1)$ are Eulerian numbers. The Eulerian number $A(n,k)$ is the number of permutations of the numbers 1 to $n$ with $k$ ascents. As discussed in \cite[Remark 23]{ABBGHHNRS2019}, the ML degree of the second hypersimplex $\Delta_{2, n}$, which corresponds to the $\beta$-SBM model $\mathcal M(n)$ is exactly $A(n-1, 1)$.
\end{remark}

By combining the three parts of \cref{thm:RecursionTheorem}, we obtain a formula involving a product of Eulerian numbers for the ML degree of a $\beta$-SBM of arbitrary size.

\begin{corollary}
Let $k,n_1,n_2,\dots,n_k$ be positive integers such that $k>1$. If $N\subseteq [k]$ is the set of indices for blocks containing more than two vertices, 
\[
\MLdeg(n_1,n_2,\dots,n_k)=\prod_{i\in N}\MLdeg(n_i,1)=\prod_{i\in N}(2^{n_i}-(n_i+1)),
\]
when $N$ is non-empty, and $\MLdeg(n_1,\dots,n_k)=1$ otherwise (when all blocks have size 1 or 2).
\end{corollary}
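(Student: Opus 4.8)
The plan is to read the corollary off the three parts of \cref{thm:RecursionTheorem} with no new input. Since $k>1$, part (3) applies and gives
\[
\MLdeg(n_1,n_2,\dots,n_k)=\prod_{i\in[k]}\MLdeg(n_i,1).
\]
I would then rewrite each factor by part (2) as $\MLdeg(n_i,1)=\MLdeg(n_i+1)$, which is legitimate because every $\MLdeg(n_i,1)$ is a two-block model with total size $n_i+1\ge 2$, so the hypotheses of the theorem are met. This reduces everything to evaluating the single-block ML degrees $\MLdeg(m)$ for $m=n_i+1$.

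Next I would apply part (1) to each such factor. The case $n_i+1=2$ (i.e.\ $n_i=1$) gives $\MLdeg(n_i,1)=\MLdeg(2)=1$, and for $n_i\ge 2$ it gives $\MLdeg(n_i,1)=\MLdeg(n_i+1)=2^{(n_i+1)-1}-(n_i+1)=2^{n_i}-(n_i+1)$. The one place that needs a line of care is the block size $n_i=2$: there the formula yields $2^{2}-3=1$, so blocks of size $2$ also contribute a trivial factor. Since $2^{n_i}-(n_i+1)>1$ exactly when $n_i\ge 3$, the factors different from $1$ are precisely those indexed by $N=\{i:n_i>2\}$, and each such factor equals $2^{n_i}-(n_i+1)=\MLdeg(n_i,1)$.

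Finally I would collapse the product over $[k]$ to the product over $N$ by discarding the factors equal to $1$, obtaining $\MLdeg(n_1,\dots,n_k)=\prod_{i\in N}\MLdeg(n_i,1)=\prod_{i\in N}(2^{n_i}-(n_i+1))$; and when $N=\emptyset$ the empty product is $1$, which is consistent with the computation above showing every block of size $1$ or $2$ contributes a factor of $1$. I do not anticipate any real obstacle: the mathematical content is entirely contained in \cref{thm:RecursionTheorem}, and the corollary is just bookkeeping, the only subtleties being the boundary check at $n_i\in\{1,2\}$ and the empty-product convention.
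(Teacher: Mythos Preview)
Your proposal is correct and matches the paper's approach: the paper does not give a separate proof of the corollary but simply states that it follows by combining the three parts of \cref{thm:RecursionTheorem}, which is exactly what you do, including the boundary checks at $n_i\in\{1,2\}$ and the empty-product convention.
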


\cref{thm:RecursionTheorem} shows that the ML degree of a $\beta$-SBM is a product of ML degrees of models involving the blocks, thus, an immediate corollary is that augmenting a model with either new blocks or new vertices in existing blocks does not decrease its maximum likelihood degree. In other words, the maximum likelihood degree of $\beta$-stochastic blockmodels exhibits a type of monotonicity.

\begin{corollary}
Let $M_1=\mathcal{M}(m_1,m_2,\dots,m_j)$ and $M_2=\mathcal{M}(n_1,n_2,\dots,n_k)$ be $\beta$-SBMs. If $j\leq k$ and $m_i\leq n_i$ for all $1\leq i\leq j$ then $\MLdeg(M_1)\leq\MLdeg(M_2)$.
\end{corollary}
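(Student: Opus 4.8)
The plan is to read the inequality off the multiplicative formula in \cref{thm:RecursionTheorem}, after first reducing to the case $j,k\ge 2$ in which part~(3) of that theorem applies. If $j=1$, then by part~(2) we have $\MLdeg(M_1)=\MLdeg(m_1)=\MLdeg(m_1-1,1)$, so we may replace $M_1$ by the two-block model $\mathcal M(m_1-1,1)$ without changing its ML degree; likewise, if $k=1$ we replace $M_2$ by $\mathcal M(n_1-1,1)$. A single-block $\beta$-SBM has at least two vertices, so $m_1-1$ and $n_1-1$ are positive, and one checks directly that the hypotheses $j\le k$ and $m_i\le n_i$ survive these replacements (for instance $m_1-1\le n_1$ and $1\le n_2$ when $j=1<k$, and $m_1-1\le n_1-1$ together with $1\le 1$ when $j=k=1$). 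Henceforth we assume $j,k\ge 2$.

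Next I would record the behavior of the single-block factor $f(m):=\MLdeg(m,1)$. Parts~(1) and~(2) of \cref{thm:RecursionTheorem} give $f(1)=f(2)=1$ and $f(m)=2^m-m-1$ for $m\ge 2$. From this closed form one sees at once the only two facts the argument needs: $f(m)\ge 1$ for every positive integer $m$, and $f$ is non-decreasing, since $f(m+1)-f(m)=2^m-1>0$ for $m\ge 2$ while $f(1)=f(2)\le f(3)$.

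Now part~(3) of \cref{thm:RecursionTheorem} gives $\MLdeg(M_1)=\prod_{i=1}^{j}f(m_i)$ and $\MLdeg(M_2)=\prod_{i=1}^{k}f(n_i)$. Splitting off the trailing $k-j$ factors of the second product and bounding each of them below by $1$, and then using $f(m_i)\le f(n_i)$ for $1\le i\le j$ (monotonicity of $f$ together with $m_i\le n_i$), I obtain
\[
\MLdeg(M_2)=\Big(\prod_{i=1}^{j}f(n_i)\Big)\Big(\prod_{i=j+1}^{k}f(n_i)\Big)\ \ge\ \prod_{i=1}^{j}f(n_i)\ \ge\ \prod_{i=1}^{j}f(m_i)=\MLdeg(M_1),
\]
where the trailing product is empty (and hence equal to $1$) when $j=k$.

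Since \cref{thm:RecursionTheorem} supplies all the structural content, I do not expect a genuine obstacle here; the only points that call for a little care are the bookkeeping in the reduction to $j,k\ge 2$ and the observation that $f$ is only non-decreasing --- it equals $1$ at both $m=1$ and $m=2$ --- so the argument must avoid any appeal to strict monotonicity.
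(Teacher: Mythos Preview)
Your argument is correct and matches the paper's approach: the paper does not spell out a proof but simply notes that the monotonicity is immediate from the product formula in \cref{thm:RecursionTheorem}, which is exactly what you do. Your explicit reduction of the single-block case via part~(2) and your careful check that $f(m)=\MLdeg(m,1)$ is non-decreasing and bounded below by $1$ are more detail than the paper provides, but they fill in precisely the steps the paper leaves implicit.
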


\section{Proof of the Main Theorem}\label{sec:MainProof}

In this section we provide the proof of the main theorem, \cref{thm:RecursionTheorem}, beginning with part one. 

\begin{proof}[Proof of \cref{thm:RecursionTheorem} Part 1:]\,

Let $n$ be a positive integer such that $n\geq 2$, and let $M$ be the single block $\beta$-SBM on $n$ vertices with design matrix $A$. If $n=2$, the system of equations $L(M)$ consists only of the linear system $A(\textbf{p}-\textbf{u})=\textbf{0}$, and since any $G \in \mathcal G_2$ has at most one edge, the matrix $A$ has only a single column. Thus, $\ker(A)=\textbf{0}$ and $L(M)$ has a unique solution $\textbf{p}=\textbf{u}$, so $\MLdeg(M)=1$.

Now suppose $n>2$. The single $\alpha$ row $\alpha_{1,1}$, which consists of all 1's, in the design matrix of $A$ is a linear combination of the $\beta$ rows. Thus, $A$ and the matrix formed by removing row $\alpha_{1,1}$ are row equivalent and this new matrix is the incidence matrix of the complete graph on $n$ vertices. By \cite[Remark 23]{ABBGHHNRS2019}, the maximum likelihood degree is $2^{n-1}-n$, as the convex hull of the columns of this matrix is an $n-1$ dimensional polytope known as the second hypersimplex of order $n$. 
\end{proof}

Now we consider $\beta$-SBMs with more than one block. In the case where we have two blocks, one with a single vertex, the ML degree is the same as the ML degree for the single block model with the same number of vertices. 

\begin{proof}[Proof of \cref{thm:RecursionTheorem} Part 2:]\

Let $n_1$ be a positive integer. Let  $M$ and $M'$, be $\beta$-SBM's, with $n_1+1$ vertices. Let $M$ have a single block with vertex set $V_1=\{(1,v):1\leq v\leq n_1+1\}$ and let $M'$ have two blocks with vertex sets $V_1'=\{(1,v):1\leq v\leq n_1\}$ and $V_2'=\{(2,1)\}$. The quadratic likelihood equations for $M$ that do not use vertex $(1,n_1+1)$ are identical to those for $M'$ that do not use vertex $(2,1)$. Moreover, there is a one-to-one correspondence between the within block quadratics of $M$ that use vertex $(1,n_1+1)$ and the 3-1 quadratics of $M'$ that use vertex $(2,1)$. These are the only quadratic likelihood equations for these models, and their solution sets are the same after relabeling the singleton vertex. 

Now consider the linear equations. Let $A$ and $A'$ be the design matrices for $M$ and $M'$ respectively. First, notice that the $\beta$ rows of $A$ and $A'$ are the same if we identify the $(1,n_1+1)$ vertex of $M$ with the $(2,1)$ vertex of $M'$. Since $M$ has only one block, $A$ has only one $\alpha$ row, $\alpha_{1,1}$, consisting of all 1's. The $A'$ design matrix has three $\alpha$ rows, and we can go from $A$ to $A'$ using row operations as follows. The $\alpha_{1,1}$ row of $A'$ is obtained by subtracting the $\beta_{(1,n_1+1)}$ row of $A$ (which is the same as the $\beta_{(2,1)}$ row of $A'$) from the $\alpha_{1,1}$ row of $A$. The $\alpha_{1,2}$ row of $A'$ is the same as the $\beta_{(1,n_1+1)}$ row of $A$, and the $\alpha_{2,2}$ row of $A'$ consists of all 0's. Thus, $A(\mathbf{p}-\mathbf{u})=0$ and $A'(\mathbf{p}-\mathbf{u})=0$ have the same solution set if we relabel the $(2,1)$ vertex of $M'$, and therefore $\MLdeg(n_1+1)=\MLdeg(n_1,1)$. 

\end{proof}

We will need the following lemma  to prove \cref{thm:RecursionTheorem} Part 3; the proof of the lemma is in \cref{sec:factoringproof}.

\begin{lemma}[Factoring Lemma]\label{lem:FactoringLemma}
Let $M$ be a $\beta$-SBM $\mathcal{M}(n_1,n_2,\ldots,n_k)$ with $k>1$. Then
\[\MLdeg(n_1,n_2,\dots,n_k)=\MLdeg(n_1,n_2,\dots,n_{k-1},1)\MLdeg(n_k,1).\]
\end{lemma}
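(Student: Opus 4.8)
The plan is to construct an explicit bijection between the solution set of $L(M)$ and the Cartesian product of the solution sets of $L(M')$ and $L(M'')$, where $M'=\mathcal{M}(n_1,\dots,n_{k-1},1)$ is $M$ with block $k$ contracted to the single vertex $(k,1)$, and $M''=\mathcal{M}(n_k,1)$ is block $k$ together with one new vertex $z$ representing ``the rest of the graph''. Partition the dyads of $M$ into three families: the within-block-$k$ dyads $E_{k,k}$; the \emph{cross} dyads $\bigcup_{i<k}E_{i,k}$, which I view as the entries of a $V_B\times V_k$ matrix, $V_B:=\bigcup_{i<k}V_i$; and the \emph{outer} dyads, those meeting neither copy of block $k$. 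Define $\mathbf{u}'$ by keeping the outer entries of $\mathbf{u}$ and setting $\mathbf{u}'_{(i,v)(k,1)}=\sum_{w\in V_k}\mathbf{u}_{(i,v)(k,w)}$ (the cross row sums), and define $\mathbf{u}''$ by keeping the $E_{k,k}$ entries of $\mathbf{u}$ and setting $\mathbf{u}''_{(k,w)z}=\sum_{(i,v)\in V_B}\mathbf{u}_{(i,v)(k,w)}$ (the cross column sums). Both $\mathbf{u}\mapsto\mathbf{u}'$ and $\mathbf{u}\mapsto\mathbf{u}''$ are surjective linear maps, so a generic $\mathbf{u}$ produces generic $\mathbf{u}'$ and $\mathbf{u}''$; hence all three solution sets are finite, of sizes $\MLdeg(M)$, $\MLdeg(M')$, $\MLdeg(M'')$, and a bijection will prove the product formula.

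The forward map $\Phi$ applies the same summing recipe to a solution $\mathbf{p}$ of $L(M)$: keep the outer and within-$k$ coordinates, put the cross row sums on the dyads $(i,v)(k,1)$, and put the cross column sums on the dyads $(k,w)z$. The reverse map $\Psi$ sends a pair $(\mathbf{q},\mathbf{r})$ to the vector that equals $\mathbf{q}$ on the outer dyads, equals $\mathbf{r}$ on $E_{k,k}$, and has cross coordinates $\mathbf{p}_{(i,v)(k,w)}=\mathbf{q}_{(i,v)(k,1)}\,\mathbf{r}_{(k,w)z}/S$. One first pins down $S$: the degree equation at $(k,1)$ in $L(M')$ and the degree equation at $z$ in $L(M'')$ both force $\sum_{(i,v)}\mathbf{q}_{(i,v)(k,1)}=\sum_{w}\mathbf{r}_{(k,w)z}$, with common value $\sum_{i<k}\sum_{e\in E_{i,k}}\mathbf{u}_e$; the $\alpha_{i,k}$-equations of $L(M)$ force this same scalar to equal $\sum_{i<k}\sum_{e\in E_{i,k}}\mathbf{p}_e$ for any solution of $L(M)$. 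It is nonzero for generic $\mathbf{u}$, so $\Psi$ is everywhere defined.

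Then one verifies that $\Phi$ and $\Psi$ land in the claimed solution sets and are mutually inverse. The linear equations are pure bookkeeping: summing cross coordinates over $V_k$ converts ``all edges from $(i,v)$ into block $k$'' into the single dyad $(i,v)(k,1)$, matching the degree equations at $V_B$ and at $(k,1)$, and symmetrically for $V_k$ and $z$; the block-pair equations for $\{i,j\}$ with $i,j<k$ and for $\{k,k\}$ are literally unchanged, while the $\{i,k\}$-equations of $M$ become the block-pair equations between block $i$ and the singleton of $M'$. For the quadratic equations the two facts to establish are: (i) the $2\times2$ minors of the cross matrix are exactly the 2-2 binomials of $M$ having block $k$ as one of their two blocks, together with the 2-1-1 binomials of $M$ whose two-vertex block is $k$, so on any solution of $L(M)$ the cross matrix has rank $\le1$ and is therefore reconstructed from its row and column sums by precisely the outer-product formula defining $\Psi$ --- this gives $\Psi\circ\Phi=\mathrm{id}$, and $\Phi\circ\Psi=\mathrm{id}$ follows from the degree equations at $(k,1)$ and $z$; and (ii) each 3-1 or 2-1-1 binomial of $M'$ through $(k,1)$ (respectively, each 3-1 binomial of $M''$ through $z$) is the $V_k$-sum (respectively $V_B$-sum) of the matching binomials of $M$, while conversely, after substituting the outer-product formula, each binomial of $M$ that mixes a cross dyad with an outer or within-$k$ dyad collapses to a binomial of $M'$ or of $M''$, and each binomial of $M$ supported entirely on cross dyads becomes an identity in $\mathbf{q},\mathbf{r}$. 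Granting (i) and (ii), $\Phi$ is a bijection and $\MLdeg(M)=\MLdeg(M')\,\MLdeg(M'')$.

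The main obstacle is the quadratic-equation accounting in the third step: one must classify the four Markov-move families of $M$, $M'$, and $M''$ according to where their four vertices sit relative to block $k$, and then check, case by case, that every move of a submodel is a $V_k$- or $V_B$-sum of moves of $M$, and that every move of $M$ survives the outer-product substitution (collapsing either to a submodel move or to a trivial identity). Inside this, the decisive point is the rank-$\le1$ claim for the cross matrix: it requires that \emph{all} of its $2\times2$ minors, not merely the adjacent ones, occur among the 2-2 and 2-1-1 binomials of $M$, and it is this that promotes $\Psi$ from a right inverse on the image of $\Phi$ to a genuine two-sided inverse.
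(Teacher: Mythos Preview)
Your proposal is correct and is essentially the same proof as the paper's: the paper defines the same two contracted models (called $M_1$ and $M_2$, with the contracted vertex labeled $(*,1)$), builds the forward map by the same row/column summing, builds the inverse by the same outer-product formula $p_{(i,v)(k,w)}=\hat p_{(i,v)(*,1)}\hat p_{(*,1)(k,w)}/\sum_x\hat p_{(*,1)(k,x)}$, and verifies bijectivity by the same case analysis of the linear and quadratic likelihood equations. Your framing of the injectivity step as ``the cross matrix has rank $\le1$ because all its $2\times2$ minors are 2-2 or 2-1-1 binomials of $M$'' is exactly the content of the paper's Lemma~5.7, just stated more compactly.
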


\begin{proof}[Proof of \cref{thm:RecursionTheorem} Part 3:]\,

Let $k,n_1,n_2,\dots,n_k$ be positive integers such that $k>1$. Given a $\beta$-SBM $\mathcal{M}(n_1,n_2,\dots,n_k)$ with design matrix $A$, define $m=|\{i\in[k]:n_i>1\}|$, i.e., the number of blocks having more than one vertex, and proceed by induction on $m$. For the base case, assume $m=0$, that is $n_1=n_2=\cdots=n_k=1$. Then there are no quadratic equations in $L(M)$, only linear equations given by $A\mathbf{p}=A\mathbf{u}$. Let $e\in E$ be a potential edge for $M$. Then $e=(i,1)(j,1)$ for $i,j\in [k]$ such that $i\neq j$. Notice that the $\alpha_{i,j}$ row of $A$ consists of a single 1 in the $p_{(i,1)(j,1)}$ column, so $p_e=u_e$. Thus, $A\mathbf{p}=A\mathbf{u}$ has only the solution $\mathbf{p}=\mathbf{u}$ and since $\MLdeg(1,1)=1$ by \cref{thm:RecursionTheorem} Parts 1 and 2,
\[
\MLdeg(n_1,n_2,\dots,n_k)=1=\prod_{i\in[k]} 1=\prod_{i\in[k]}\MLdeg(1,1)=\prod_{i\in[k]}\MLdeg(n_i,1),
\]
proving the base case. 

Now suppose that the claim is true for some fixed but arbitrary $m=m_0\geq 0$ and consider a model $M=\mathcal{M}(n_1,n_2,\dots,n_k)$ with $m=m_0+1\geq 1$ blocks having more than one vertex. \cref{lem:ReorderBlocks} allows us to assume that $n_k>1$. Then by \cref{lem:FactoringLemma},
$\MLdeg(n_1,n_2,\dots,n_k)=\MLdeg(n_1,n_2,\dots,n_{k-1},1)\MLdeg(n_k,1).$
The model in the first factor now has $m_0$ blocks with more than one vertex, so we can apply the inductive hypothesis and the fact that $\MLdeg(1,1)=1$ to get 
\[
\MLdeg(n_1,n_2,\dots,n_{k-1},1)=\left ( \prod_{i=1}^{k-1}\MLdeg(n_i,1) \right ) \MLdeg(1,1) = \prod_{i=1}^{k-1}\MLdeg(n_i,1),
\]
and therefore
$\MLdeg(n_1,n_2,\dots,n_k)=\prod_{i=1}^{k}\MLdeg(n_i,1),$ as desired.
\end{proof}

\section{Proof of the Factoring Lemma}

The proof of the main theorem relies heavily on the Lemma \ref{lem:FactoringLemma}, the Factoring Lemma, and the proof of Lemma \ref{lem:FactoringLemma} is where the main work of this paper lies.  The proof hinges on the fact that there is a complete description of the generating set of the ideal $I_A$. 

Throughout this section, we will use the following notation. Let $k, n_1, n_2, \dots, n_k$ be positive integers with $k>1$, and let $M=\mathcal{M}(n_1,n_2,\dots,n_k)$ be a $\beta$-SBM where block $i$ has vertex set $V_i=\{(i,v):1\leq v\leq n_i\}$ for each $i\in [k]$, and potential edge set $E=\bigcup_{i=1}^k\bigcup_{j=1}^k E_{i,j}$ as previously defined. Let $A$ be the design matrix for $M$. Given a generic point $\textbf{u}\in\mathbb{C}^{|E|}$, we denote the system of likelihood equations for $M$ by $L(M)$, with $\mathcal{S}\subseteq\mathbb{C}^{|E|}$ being the corresponding solution set.

\subsection{Constructing new models by contracting blocks}

To prove \cref{lem:FactoringLemma}, we  introduce new models, $M_1$, $M_2$,  each arising from the model $M$ by collapsing a particular portion of $M$ to a single vertex block. Call the vertex set of this block $V_{*}=\{(*,1)\}$.

\begin{definition}\label{def:M1CollapseLast}
Let $M = \mathcal M(n_1, \ldots, n_k)$ be a $\beta$-SBM.  Let $M_1$ be a modification of the model $M$ obtained by the contraction of block $V_k$ to a single vertex $(*,1)$. The set of potential edges for $M_1$ is
\[
E_1=\left (\bigcup_{i=1}^{k-1}\bigcup_{j=i}^{k-1} E_{i,j}\right ) \cup \left (\bigcup_{i=1}^{k-1} E_{i,*} \right )
\quad \quad 
\text{where} \quad \quad 
E_{i,*} = \{(i,v)(*,1): 1\leq v\leq n_i\}.
\]
We will denote the design matrix for $M_1$ as $A_1$. Suppose $\mathbf{p}\in\mathcal{S}$, and let $\mathbf{u}\in \mathbb{C}^{|E|}$ be a generic point. For each $i\in[k-1]$ and $v\in[n_i]$, define $p_{(i,v)(*,1)}:=\sum_{w=1}^{n_{k}} p_{(i,v)(k,w)}$ and $u_{(i,v)(*,1)}:=\sum_{w=1}^{n_{k}} u_{(i,v)(k,w)}$. Define $\textbf{u}_1 \in \mathbb{C}^{|E_1|}$ so that the coordinate indexed by $e\in E_1$ is $u_e$.  In particular, the coordinates of $\mathbf{u}$ and $\mathbf{u_1}$ agree on all potential edges that appear in both $M$ and $M_1$, while the remaining coordinates, corresponding to the edges in $E_{i,*}$ are defined above. Finally, we will refer to the set of complex solutions to the system $L(M_1)$ as $\mathcal{S}_1$. Note that
$|\mathcal{S}_1|=\MLdeg(M_1)=\MLdeg(n_1,n_2,\dots, n_{k-1},1).$
\end{definition}

\begin{definition}\label{def:M2CollapseFirst}
Let $M = \mathcal M(n_1, \ldots, n_k)$ be a $\beta$-SBM.  Let $M_2$ be a modification of the model $M$ obtained by the contraction of blocks $V_1, V_2,\dots, V_{k-1}$ to a single vertex $(*,1)$. The set of potential edges for $M_2$ is
\[
E_2=E_{*,k} \cup E_{k,k}
\quad \quad 
\text{where}\quad \quad 
E_{*,k}=\{(*,1)(k,w): 1\leq w\leq n_k\}.
\]
We will denote the design matrix for $M_2$ as $A_2$. Suppose $\mathbf{p}\in\mathcal{S}$, and let $\mathbf{u}\in \mathbb{C}^{|E|}$ be a generic point. For each $w\in[n_k]$, define $p_{(*, 1)(k,w)}:=\sum_{i=1}^{k-1}\left(\sum_{v=1}^{n_i} p_{(i,v)(k,w)}\right)$ and $u_{(*, 1)(k,w)}:=\sum_{i=1}^{k-1}\left(\sum_{v=1}^{n_i} u_{(i,v)(k,w)}\right)$. Define $\textbf{u}_2 \in \mathbb{C}^{|E_2|}$ so that the coordinate indexed by $e\in E_2$ is $u_e$. In particular, the coordinates of $\mathbf{u}$ and $\mathbf{u_2}$ agree on all potential edges that appear in both $M$ and $M_2$, while the remaining coordinates, corresponding to the edges in $E_{*,k}$ are defined above.  Finally, we will refer to the set of complex solutions to the system $L(M_2)$ as $\mathcal{S}_2$. Note that
$|\mathcal{S}_2|=\MLdeg(M_2)=\MLdeg(n_k,1).$
\end{definition}

\begin{figure}[ht]
\centering
\begin{tikzpicture}
   \draw (7.5,4.25) node{$M_1$};
   \draw [decorate, decoration = {brace,amplitude=10pt}] (2,3.25) --  (12.75,3.25);
   \foreach \x/\i in {3/1,5.5/2,9.375/{k-1},14.25/k}{%
      \draw (\x,.8) circle (1.2 cm and 2cm);
      \foreach \y/\v in {1,2,3.65/{n_{\i}}}{%
         \fill (\x,{3.5-\y}) circle (0.1cm);
         \draw (\x,{3.2-\y}) node{\footnotesize$(\i,\v)$};
      }%
      \draw (\x,.6) node{$\vdots$};
   }%
   \draw (7.5,0.8) node{\Large$\cdots$};
   \fill (11.8,.8) circle (0.1cm);
   \draw (11.8,0.4) node{\footnotesize$(*,1)$};
   \draw [decorate, decoration = {brace,mirror,amplitude=10pt}] (11.5,-1.5) --  (15.5,-1.5);
   \draw (13.5,-2.25) node{$M_2$};
\end{tikzpicture}
\caption{The blocks of $M_1$ and $M_2$. This figure exhibits the blocks of $M$ together with the newly defined block $V_{*}$ while also indicating which blocks are members of $M_1$ and which blocks are members of $M_2$.}
\label{fig:M1M2}
\end{figure}
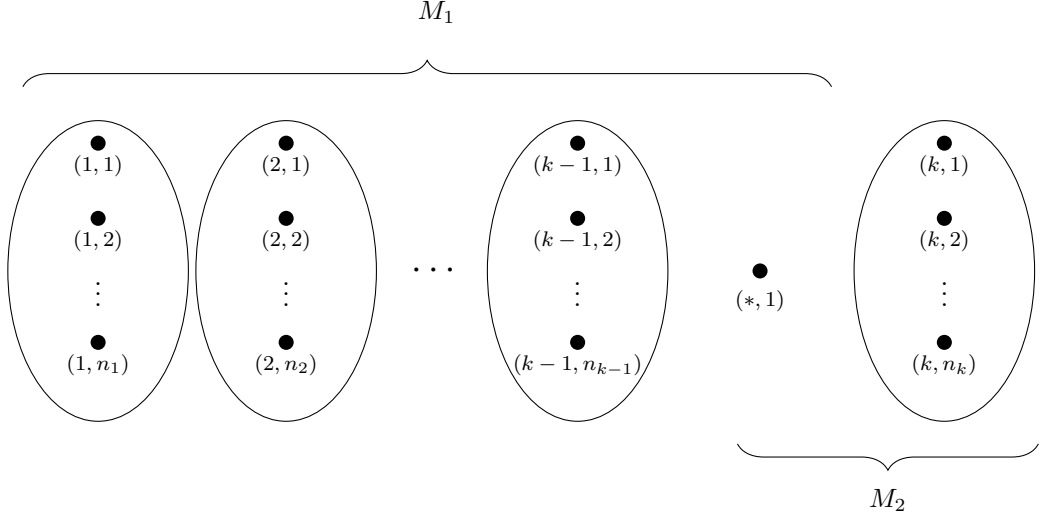

Because our graphs contain no self-adjacent vertices, we introduce the following convention to simplify the index notation for sums. For all $i\in [k] \cup \{*\}$ and for all $(i,v) \in V_i$, define $p_{(i,v)(i,v)}=u_{(i,v)(i,v)}=0$.

\subsection{Proof of Factoring Lemma}\label{sec:factoringproof}

\cref{lem:FactoringLemma} allows us to write the ML degree of model $M$ as the product of the ML degrees of models $M_1$ and $M_2$. To prove this lemma, we will construct a bijective map $\phi:\mathcal{S}\rightarrow \mathcal{S}_1\times\mathcal{S}_2$. To begin, we show that given a solution $\mathbf{p}\in\mathcal{S}$ of  $L(M)$, we can obtain solutions $(\mathbf{p}_1,\mathbf{p}_2)\in\mathcal{S}_1\times\mathcal{S}_2$.

\begin{lemma}\label{lem:p1p2Solutions}
Let $\textup{\textbf{u}}\in\mathbb{C}^{|E|}$ be  generic, and let $\textup{\textbf{p}}\in\mathcal{S}$ be a particular solution of the system $L(M)$, both indexed by $E$ with coordinates $u_{(i,v)(j,w)}$ and $p_{(i,v)(j,w)}$ for $(i,v)(j,w)\in E$, respectively. Let $\textup{\textbf{u}}_1$, $\mathcal{S}_1$, $\textup{\textbf{u}}_2$, and $\mathcal{S}_2$ be as defined in Definitions \ref{def:M1CollapseLast} and \ref{def:M2CollapseFirst}. If $\textbf{p}_1 \in \mathbb{C}^{|E_1|}$ is defined so that the coordinate indexed by $e\in E_1$ is $p_e$ and $\textbf{p}_2 \in\mathbb{C}^{|E_2|}$ is defined so that the coordinate indexed by $e\in E_2$ is $p_e$, then $(\textup{\textbf{p}}_1,\textup{\textbf{p}}_2)\in \mathcal{S}_1\times \mathcal{S}_2$.
\end{lemma}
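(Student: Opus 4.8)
The plan is to verify directly that the two projected points $\textbf{p}_1$ and $\textbf{p}_2$ satisfy every equation in $L(M_1)$ and $L(M_2)$ respectively, by expressing each such equation in terms of the original equations in $L(M)$, which $\textbf{p}$ is assumed to satisfy. The argument splits naturally into the linear part ($A_i(\textbf{p}_i - \textbf{u}_i) = \textbf{0}$) and the quadratic part (the binomial equations from \cref{tab:binomials}), and each of $M_1$, $M_2$ is handled separately, so there are four blocks of verification.

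For the linear equations, I would argue as follows. The rows of $A_1$ are: the $\beta$-rows for vertices $(i,v)$ with $i\in[k-1]$, the $\alpha_{i,j}$-rows for $1\le i\le j\le k-1$, the $\alpha_{i,*}$-rows for $i\in[k-1]$, the new $\beta_{(*,1)}$-row, and the $\alpha_{*,*}$-row. For a fixed $\beta_{(i,v)}$-row with $i\le k-1$, the corresponding equation for $M$ sums $p_{(i,v)(j,w)} - u_{(i,v)(j,w)}$ over all potential edges at $(i,v)$; grouping the terms with $j = k$ and using the definitions $p_{(i,v)(*,1)} = \sum_w p_{(i,v)(k,w)}$, $u_{(i,v)(*,1)} = \sum_w u_{(i,v)(k,w)}$ turns this into exactly the $\beta_{(i,v)}$-equation for $M_1$ evaluated at $\textbf{p}_1$. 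The $\alpha_{i,j}$-equations with $i,j\le k-1$ are literally unchanged. The $\alpha_{i,*}$-equation for $M_1$ is $\sum_{v}(p_{(i,v)(*,1)} - u_{(i,v)(*,1)}) = \sum_v\sum_w (p_{(i,v)(k,w)} - u_{(i,v)(k,w)})$, which is the sum over $E_{i,k}$ of the $M$-residuals, i.e. exactly the $\alpha_{i,k}$-equation for $M$. The $\beta_{(*,1)}$-equation and the $\alpha_{*,*}$-equation for $M_1$ are recovered, respectively, as the sum of the $M$-equations $\alpha_{1,k},\dots,\alpha_{k-1,k}$ (since every edge at $(*,1)$ in $M_1$ comes from some $E_{i,k}$), and from the vacuous/zero row (there are no within-$V_k$ edges that survive the contraction — this is the $n_k\ge 1$ block collapsing, so $E_{k,k}$ edges are simply not present in $M_1$). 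The same bookkeeping, with blocks $1,\dots,k-1$ collapsed instead of $k$, handles $A_2(\textbf{p}_2 - \textbf{u}_2) = \textbf{0}$: the $\beta_{(k,w)}$-equations and the $\alpha_{k,k}$-equation of $M$ descend unchanged or by summation, the $\beta_{(*,1)}$-equation of $M_2$ is the sum of all $\beta_{(i,v)}$-equations of $M$ over $i\le k-1$ minus a multiple of things already used (one has to be slightly careful here, since each edge inside $V_1\cup\dots\cup V_{k-1}$ is double-counted — I would instead obtain it as a suitable combination of the $\alpha_{i,j}$ and $\beta$ rows of $M$, which is legitimate since all those equations hold at $\textbf{p}$), and the $\alpha_{*,k}$-equation of $M_2$ is the sum $\sum_{i\le k-1}\alpha_{i,k}$ of $M$-equations.

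For the quadratic equations, the key observation is that each binomial in $L(M_1)$ or $L(M_2)$ either already appears verbatim in $L(M)$ (when all four vertices involved are ordinary vertices not in the contracted blocks), or, when it involves $(*,1)$, expands — upon substituting $p_{(i,v)(*,1)} = \sum_w p_{(i,v)(k,w)}$ and the analogous sums — into a sum of binomials each of which is a consequence of $L(M)$. Concretely, a within-block quadratic of $M_1$ touching the singleton block $V_*$ is impossible (a within-block quadratic needs four vertices in one block), so the relevant cases are: $3$-$1$ quadratics of $M_1$ of the form $p_{(i,t)(*,1)}p_{(i,u)(i,v)} - p_{(i,t)(i,u)}p_{(i,v)(*,1)}$, and $2$-$1$-$1$ and $2$-$2$ quadratics of $M_1$ involving $(*,1)$. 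Substituting the sum-definition, $p_{(i,t)(*,1)}p_{(i,u)(i,v)} - p_{(i,t)(i,u)}p_{(i,v)(*,1)} = \sum_{w=1}^{n_k}\bigl(p_{(i,t)(k,w)}p_{(i,u)(i,v)} - p_{(i,t)(i,u)}p_{(i,v)(k,w)}\bigr)$, and each summand vanishes because it is precisely a $3$-$1$ equation of $M$. The $2$-$1$-$1$ and $2$-$2$ cases (and the analogous cases for $M_2$, where now one collapses the $\alpha$-incident block into a single vertex so that a $2$-$1$-$1$ move of $M$ becomes a $2$-$2$ move of $M_2$, etc.) go through by the same double-index expansion; I would present one representative computation in full and remark that the others are identical in structure. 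One subtlety worth flagging: a $2$-$1$-$1$ quadratic of $M_2$ involving $(*,1)$ in two of its vertices would require $(*,1)$ to appear twice, which is forbidden, so only a bounded list of cases actually occurs — enumerating that list carefully (and checking that no quadratic of $M_1$ or $M_2$ is "new", i.e. not derivable) is really the crux.

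The main obstacle I anticipate is precisely this exhaustive-but-delicate case analysis for the quadratics: one must be confident that every binomial generator of $I_{A_1}$ and $I_{A_2}$, as classified by \cref{tab:binomials}, either lifts to a generator of $I_A$ or expands as a $\textbf{u}$-independent linear combination (over the solution set $\mathcal S$) of generators of $I_A$ — and, conversely, that one is not silently using the genericity of $\textbf{u}$ in a way that fails. Since the quadratic equations do not involve $\textbf{u}$ at all, the only place genericity enters is implicitly through $\textbf{p}\in\mathcal S$ being a bona fide solution; I would make explicit at the start of the proof that $\textbf{p}$ satisfies \emph{all} equations of $L(M)$ simultaneously, so that arbitrary algebraic combinations of them also hold at $\textbf{p}$, and then the quadratic verification is purely formal polynomial identities. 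With that framing the lemma reduces to a finite, mechanical check, and I would organize it as: (i) linear equations for $M_1$; (ii) quadratic equations for $M_1$; (iii) linear equations for $M_2$; (iv) quadratic equations for $M_2$.
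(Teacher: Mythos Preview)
Your plan is correct and matches the paper's proof almost exactly: verify the linear equations of $L(M_1)$ and $L(M_2)$ row by row by regrouping rows of $A\mathbf{p}=A\mathbf{u}$, and verify the quadratics by noting that binomials not touching $(*,1)$ are inherited verbatim while those that do touch $(*,1)$ expand, after substituting the sum-definitions, into sums of binomials from $L(M)$. Two small clean-ups when you write it out: $V_*$ has a single vertex, so $M_1$ has no $2$--$2$ quadratics involving $(*,1)$ and $M_2$ (having only two blocks) has no $2$--$1$--$1$ quadratics at all, so the case list is shorter than you suggest; and the $\beta_{(*,1)}$-row of $A_2\mathbf{p}_2=A_2\mathbf{u}_2$ follows cleanly just by summing the $\alpha_{i,k}$-rows of $A\mathbf{p}=A\mathbf{u}$ over $i\le k-1$ and swapping the order of summation, with no double-counting issue.
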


\begin{proof}
 Let $\mathbf{u}\in\mathbb{C}^{|E|}$ and define $\mathbf{u}_1$ and $\mathbf{u}_2$ as above. Assume $\mathbf{p}\in\mathcal{S}$ and define $\mathbf{p}_1$ and $\mathbf{p}_2$ as in the statement of the lemma. Since $\mathbf{p}\in\mathcal{S}$, $A\mathbf{p}=A\mathbf{u}$, and $\mathbf{p}$ satisfies the quadratic equations in $L(M)$. 
First we show that $A_1\mathbf{p}_1=A_1\mathbf{u}_1$. Let $i\in [k-1]$ and $v\in [n_i]$, and consider the $\beta_{(i,v)}$ row of $A$. Then since $A\mathbf{p}=A\mathbf{u}$, 
\[
\left(\sum_{j=1}^{k-1}\left(\sum_{w=1}^{n_j} p_{(i,v)(j,w)}\right)\right)+\sum_{w=1}^{n_k} p_{(i,v)(k,w)}=\left(\sum_{j=1}^{k-1}\left(\sum_{w=1}^{n_j} u_{(i,v)(j,w)}\right)\right)+\sum_{w=1}^{n_k} u_{(i,v)(k,w)}
\]
and by definition of $p_{(i,v)(*,1)}$ and $u_{(i,v)(*,1)}$ (\cref{def:M1CollapseLast}), 
\[
\left(\sum_{j=1}^{k-1}\left(\sum_{w=1}^{n_j} p_{(i,v)(j,w)}\right)\right)+p_{(i,v)(*,1)}=\left(\sum_{j=1}^{k-1}\left(\sum_{w=1}^{n_j} u_{(i,v)(j,w)}\right)\right)+u_{(i,v)(*,1)}
\]
which is the $\beta_{(i,v)}$ row of $A_1\mathbf{p}_1=A_1\mathbf{u}_1$. Now let $i\in[k-1]$ and consider the $\alpha_{i,k}$ row of $A\mathbf{p}=A\mathbf{u}$:
\begin{equation}\label{eqn:Alphaikrow1}
\sum_{v=1}^{n_i}\left(\sum_{w=1}^{n_k} p_{(i,v)(k,w)}\right)=\sum_{v=1}^{n_i}\left(\sum_{w=1}^{n_k} u_{(i,v)(k,w)}\right).
\end{equation}
By definition of $p_{(i,v)(*,1)}$ and $u_{(i,v)(*,1)}$, we get
\begin{equation}\label{eqn:Alphaikrow2}
\sum_{v=1}^{n_i}p_{(i,v)(*,1)}=\sum_{v=1}^{n_i}u_{(i,v)(*, 1)}. 
\end{equation}
Since this equality holds for each $i\in[k-1]$, we can sum over $i$, which gives
$\sum_{i=1}^{k-1}\left(\sum_{v=1}^{n_i}p_{(i,v)(*,1)}\right)=\sum_{i=1}^{k-1}\left(\sum_{v=1}^{n_i}u_{(i,v)(*, 1)}\right)$, the $\beta_{(*,1)}$ row of $A_1\mathbf{p}_1=A_1\mathbf{u}_1$. Notice that the $\alpha_{i,k}$ row above, in \cref{eqn:Alphaikrow2} is also the $\alpha_{i,*}$ row of $A_1\mathbf{p}_1=A_1\mathbf{u}_1$. For $i,j\in[k-1]$, the $\alpha_{i,j}$ row in $A_1\mathbf{p}_1=A_1\mathbf{u}_1$ provides the same equation as the corresponding $\alpha_{i,j}$ row in $A\mathbf{p}=A\mathbf{u}$. Thus, we have checked all rows and $A_1\mathbf{p}_1=A_1\mathbf{u}_1$.

Now we show that $\mathbf{p}_1$ satisfies the quadratic equations in $L(M_1)$. First note that all quadratic equations in $L(M_1)$ that are within and between the first $k-1$ blocks, $V_1,V_2,\dots,V_{k-1}$, are the same as the quadratic equations within and between blocks $V_1,V_2,\dots,V_{k-1}$ in $L(M)$. Thus $\mathbf{p}_1$ satisfies all quadratic equations in $L(M_1)$ that are between and within the first $k-1$ blocks of $L(M_1)$. This leaves us only with the 3-1 and 2-1-1 quadratic equations involving block $V_*$.

Let $i,j\in[k-1]$,  $a,b\in[n_i]$, and $c\in[n_j]$. Using \cref{def:M1CollapseLast}, we obtain 
\begin{equation}\label{eqn:M1Binomials}
\begin{split}
p_{(i,a)(j,c)}p_{(i,b)(*,1)}-p_{(i,a)(*,1)}p_{(i,b)(j,c)} & = p_{(i,a)(j,c)}\left(\sum_{w=1}^{n_k} p_{(i,b)(k,w)}\right)-\left(\sum_{w=1}^{n_k} p_{(i,a)(k,w)}\right)p_{(i,b)(j,c)} \\
& = \sum_{w=1}^{n_k} \left(p_{(i,a)(j,c)}p_{(i,b)(k,w)}-p_{(i,a)(k,w)}p_{(i,b)(j,c)}\right).
\end{split}
\end{equation}
When $i=j$ and $a,b$, and $c$ are distinct, the left hand side of \cref{eqn:M1Binomials} is a 3-1 binomial involving block $V_*$ in $L(M_1)$, and it is equal to 0 since the right hand side is a sum of 3-1 binomials in $L(M)$. Similarly, when $i\neq j$ and $a\neq b$, the left hand side of \cref{eqn:M1Binomials} is a 2-1-1 binomial involving $V_*$ in $L(M_1)$, and it is equal to 0 since the right hand side is a sum of 2-1-1 binomials in $L(M)$. Thus, we have shown that $\mathbf{p_1}$ satisfies all of the quadratic equations in $L(M_1)$, and therefore, $\mathbf{p_1}\in\mathcal{S}_1$.

Now we show that $A_2\mathbf{p}_2=A_2\mathbf{u}_2$. Let $w\in[n_k]$ and consider the $\beta_{(k,w)}$ row of $A$. Since $A\mathbf{p}=A\mathbf{u}$,
\[
\left(\sum_{i=1}^{k-1}\left(\sum_{v=1}^{n_i} p_{(i,v)(k,w)}\right)\right)+\sum_{c=1}^{n_k} p_{(k,c)(k,w)}=\left(\sum_{i=1}^{k-1}\left(\sum_{v=1}^{n_i} u_{(i,v)(k,w)}\right)\right)+\sum_{c=1}^{n_k} u_{(k,c)(k,w)},
\]
and by definition of $p_{(*,1)(k,w)}$ and $u_{(*,1)(k,w)}$, 
$p_{(*,1)(k,w)}+\sum_{c=1}^{n_k} p_{(k,c)(k,w)}=u_{(*,1)(k,w)}+\sum_{c=1}^{n_k} u_{(k,c)(k,w)}$, which is the $\beta_{(k,w)}$ row of $A_2\mathbf{p}_2=A_2\mathbf{u}_2$. Let $i\in[k-1]$ and consider the $\alpha_{i,k}$ row of $A\mathbf{p}=A\mathbf{u}$, which is \cref{eqn:Alphaikrow1} above. Since we have one of these equations for each $i\in[k-1]$, we can sum over $i$ to obtain
\[
\sum_{i=1}^{k-1}\left(\sum_{v=1}^{n_i}\left(\sum_{w=1}^{n_k} p_{(i,v)(k,w)}\right)\right)=\sum_{i=1}^{k-1}\left(\sum_{v=1}^{n_i}\left(\sum_{w=1}^{n_k} u_{(i,v)(k,w)}\right)\right),
\]
and by \cref{def:M2CollapseFirst}, we have $\sum_{w=1}^{n_k} p_{(*,1)(k,w)}=\sum_{w=1}^{n_k} u_{(*,1)(k,w)}$, which is the $\beta_{(*,1)}$ row of $A_2\mathbf{p}_2=A_2\mathbf{u}_2$. The $\alpha_{k,k}$ row of $A_2\mathbf{p}_2=A_2\mathbf{u}_2$ is the same as the $\alpha_{k,k}$ row of $A\mathbf{p}=A\mathbf{u}$, so we just need the $\alpha_{*, k}$ row of $A_2\mathbf{p}_2=A_2\mathbf{u}_2$, which is the same as the $\beta_{(*,1)}$ row, obtained above. Thus, $A_2\mathbf{p}_2=A_2\mathbf{u}_2$. 

Now we show that $\mathbf{p}_2$ satisfies the quadratic equations in $L(M_2)$. All quadratic equations within block $V_k$ in $L(M_2)$ are the same as the within block $V_k$ quadratic equations from $L(M)$. All we need to check are the 3-1 quadratic equations between block $V_*$ and block $V_k$. Let $a,b,c\in [n_k]$ be distinct. Note that if distinct $a$, $b$ and $c$ do not exist, then $L(M_2)$ has no 3-1 quadratic equations involving block $V_*$. Using \cref{def:M2CollapseFirst}, the 3-1 quadratic binomials in $L(M_2)$ involving $V_*$ satisfy
\begin{eqnarray*}
p_{(k,a)(k,b)}p_{(*,1)(k,c)}&-&p_{(k,b)(k,c)}p_{(*,1)(k,a)}\\
& = & p_{(k,a)(k,b)}\left(\sum_{i=1}^{k-1}\left(\sum_{v=1}^{n_i} p_{(i,v)(k,c)}\right)\right)-p_{(k,b)(k,c)}\left(\sum_{i=1}^{k-1}\left(\sum_{v=1}^{n_i} p_{(i,v)(k,a)} \right)\right) \\
& = & \sum_{i=1}^{k-1}\left(\sum_{v=1}^{n_i}\left(p_{(k,a)(k,b)}p_{(i,v)(k,c)}-p_{(k,b)(k,c)}p_{(i,v)(k,a)}\right)\right) 
 = 0,
\end{eqnarray*}
 where the last equality arises because we have a sum of 3-1 binomials from $L(M)$. Thus, we've shown that $\mathbf{p}_2$ satisfies all of the quadratic equations in $L(M_2)$, and therefore, $\mathbf{p}_2\in \mathcal{S}$. Hence, $(\mathbf{p}_1,\mathbf{p}_2)\in\mathcal{S}_1\times\mathcal{S}_2$. 
\end{proof}

\begin{definition}\label{def:phidef}
Let $\mathbf{u}\in\mathbb{C}^{|E|}$ be generic, and define $\mathcal{S}$, $\mathcal{S}_1$, $\mathcal{S}_2$, $\mathbf{u}_1$, and $\mathbf{u}_2$ as in Definitions \ref{def:M1CollapseLast} and \ref{def:M2CollapseFirst}. Define the function $\phi:\mathcal{S}\rightarrow (\mathcal{S}_1\times \mathcal{S}_2)$ by $\phi(\mathbf{p})=(\mathbf{p}_1,\mathbf{p}_2)$ for $\mathbf{p}\in \mathcal{S}\subseteq \mathbb{C}^{|E|}$ where for $\mathbf{p}_1\in\mathcal{S}_1\subseteq \mathbb{C}^{|E_1|}$, the coordinate indexed by $e\in E_1$ is $p_e$, and for $\mathbf{p}_2\in\mathcal{S}_2\subseteq\mathbb{C}^{|E_2|}$, the coordinate indexed by $e\in E_2$ is $p_e$, as in \cref{lem:p1p2Solutions}.
\end{definition} 

Our goal now is to show $\phi$ is a bijection. First, we show that given solutions to $L(M_1)$ and $L(M_2)$, one can construct a solution to $L(M)$. In particular, we can construct an element of $\mathcal{S}$ from an element of $\mathcal{S}_1$ and an element of $\mathcal{S}_2$. To do this, we will need the following lemma, which relates coordinates of an element of $\mathcal{S}_1$  to those of an element of $\mathcal{S}_2$.

\begin{remark}
Notice that in \cref{lem:AnnoyingLittleEquation} we were interested in an arbitrary element of $\mathcal{S}_1\times\mathcal{S}_2$ rather than one obtained from an element of $\mathcal{S}$ through summation. To help avoid confusion, in \cref{lem:AnnoyingLittleEquation} and in what follows, we use $(\hat{\mathbf{p}}_1,\hat{\mathbf{p}}_2)$ with coordinates $\hat{p}_e$ to denote an arbitrary element of $\mathcal{S}_1\times\mathcal{S}_2$ and $(\mathbf{p}_1,\mathbf{p}_2)$ with coordinates $p_e$ to denote an element of $\mathcal{S}_1\times\mathcal{S}_2$ obtained via the map $\phi$ defined above. 
\end{remark}

\begin{lemma}\label{lem:AnnoyingLittleEquation}
Let $\mathbf{u}\in \mathbb{C}^{|E|}$ and define $\mathbf{u}_1\in \mathbb{C}^{|E_1|}$ and $\mathbf{u}_2\in\mathbb{C}^{|E_2|}$ as in Definitions \ref{def:M1CollapseLast} and \ref{def:M2CollapseFirst}. Let $(\hat{\mathbf{p}}_1, \hat{\mathbf{p}}_2)\in \mathcal{S}_1\times \mathcal{S}_2$ with coordinates, $\hat{p}_e$, of $\hat{\mathbf{p}}_1$ indexed by $e\in E_1$ and coordinates, $\hat{p}_e$, of $\hat{\mathbf{p}}_2$ indexed by $e\in E_2$. Then
$$\sum_{a=1}^{n_k} \hat{p}_{(*,1)(k,a)}=\sum_{j=1}^{k-1}\sum_{b=1}^{n_j} \hat{p}_{(j,b)(*,1)}.$$

\end{lemma}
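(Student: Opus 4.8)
The plan is to observe that \cref{lem:AnnoyingLittleEquation} is a pure bookkeeping identity that follows entirely from the linear parts $A_1\hat{\mathbf{p}}_1=A_1\mathbf{u}_1$ and $A_2\hat{\mathbf{p}}_2=A_2\mathbf{u}_2$ of the systems $L(M_1)$ and $L(M_2)$; the quadratic equations are not needed. Concretely, I would show that both sides of the claimed equation equal the common quantity $\sum_{i=1}^{k-1}\sum_{v=1}^{n_i}\sum_{w=1}^{n_k} u_{(i,v)(k,w)}$.

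First I would treat the left-hand side. Since $\hat{\mathbf{p}}_2\in\mathcal{S}_2$, I look at the row of $A_2\hat{\mathbf{p}}_2=A_2\mathbf{u}_2$ indexed by the parameter $\beta_{(*,1)}$. In $M_2$ every potential edge incident to $(*,1)$ lies in $E_{*,k}$, so this row coincides with the $\alpha_{*,k}$ row (as already noted in the proof of \cref{lem:p1p2Solutions}) and reads $\sum_{a=1}^{n_k}\hat{p}_{(*,1)(k,a)}=\sum_{a=1}^{n_k} u_{(*,1)(k,a)}$. Substituting the definition $u_{(*,1)(k,a)}=\sum_{i=1}^{k-1}\sum_{v=1}^{n_i} u_{(i,v)(k,a)}$ from \cref{def:M2CollapseFirst} and reindexing the order of summation expresses the left-hand side as $\sum_{i=1}^{k-1}\sum_{v=1}^{n_i}\sum_{a=1}^{n_k} u_{(i,v)(k,a)}$.

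Next I would treat the right-hand side symmetrically. Since $\hat{\mathbf{p}}_1\in\mathcal{S}_1$, the row of $A_1\hat{\mathbf{p}}_1=A_1\mathbf{u}_1$ indexed by $\beta_{(*,1)}$ reads $\sum_{j=1}^{k-1}\sum_{b=1}^{n_j}\hat{p}_{(j,b)(*,1)}=\sum_{j=1}^{k-1}\sum_{b=1}^{n_j} u_{(j,b)(*,1)}$, since in $M_1$ the only potential edges incident to $(*,1)$ are those in the sets $E_{j,*}$ for $j\in[k-1]$. Substituting $u_{(j,b)(*,1)}=\sum_{w=1}^{n_k} u_{(j,b)(k,w)}$ from \cref{def:M1CollapseLast} rewrites the right-hand side as $\sum_{j=1}^{k-1}\sum_{b=1}^{n_j}\sum_{w=1}^{n_k} u_{(j,b)(k,w)}$, which is literally the same triple sum as before. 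Equating the two computations completes the argument.

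I do not expect a genuine obstacle. The only points requiring care are: (i) invoking the correct row of each design matrix, namely the $\beta_{(*,1)}$ row in each case, and verifying it is indeed a row of $A_1$ respectively $A_2$ (it is, since $(*,1)$ is a vertex of both $M_1$ and $M_2$) with the stated support; and (ii) correctly unfolding the two different definitions of the auxiliary coordinates $u_{(*,1)(k,w)}$ and $u_{(i,v)(*,1)}$. The convention $p_{(i,v)(i,v)}=u_{(i,v)(i,v)}=0$ plays no role here, as no diagonal terms appear.
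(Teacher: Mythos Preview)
Your proposal is correct and follows essentially the same argument as the paper: both sides are reduced to the common triple sum $\sum_{i=1}^{k-1}\sum_{v=1}^{n_i}\sum_{w=1}^{n_k} u_{(i,v)(k,w)}$ by invoking the $\beta_{(*,1)}$ row of $A_2\hat{\mathbf{p}}_2=A_2\mathbf{u}_2$ and of $A_1\hat{\mathbf{p}}_1=A_1\mathbf{u}_1$, respectively, and then unfolding the definitions of $u_{(*,1)(k,a)}$ and $u_{(j,b)(*,1)}$.
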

\begin{proof}
Let $(\hat{\mathbf{p}}_1,\hat{\mathbf{p}}_2)\in\mathcal{S}_1\times\mathcal{S}_2$ as in the statement of the lemma. Then $\hat{\mathbf{p}}_1$ must satisfy all equations in $L(M_1)$, and must in particular satisfy the linear equations $A_1\hat{\mathbf{p}}_1=A_1\mathbf{u}_1$ and $\hat{\mathbf{p}}_2$ must satisfy all equations in $L(M_2)$, and must in particular satisfy the linear equations $A_2\hat{\mathbf{p}}_2=A_2\mathbf{u}_2$. By using this along with the definitions of $\mathbf{u}_{(*,1)(k,a)}$ and $\mathbf{u}_{(j,b)(*,1)}$, we have that
\begin{align*}
        \sum_{a=1}^{n_k} \hat{p}_{(*,1)(k,a)} 
     = \sum_{a=1}^{n_k} u_{(*,1)(k,a)}
     &= \sum_{a=1}^{n_k} \left( \sum_{j=1}^{k-1}\sum_{b=1}^{n_j} u_{(j,b)(k,a)} \right)\\  
     &=\sum_{j=1}^{k-1}\sum_{b=1}^{n_j} \left( \sum_{a=1}^{n_k} u_{(j,b)(k,a)}  \right) 
     =\sum_{j=1}^{k-1}\sum_{b=1}^{n_j}  u_{(j,b)(*,1)}   
     = \sum_{j=1}^{k-1}\sum_{b=1}^{n_j} \hat{p}_{(j,b)(*,1)},
\end{align*}
where the first equality is true by the $\beta_{(*,1)}$ row of $A_2\hat{\mathbf{p}}_2=A_2\mathbf{u}_2$ and the last equality is true by the $\beta_{(*,1)}$ row of $A_1\hat{\mathbf{p}}_1=A_1\mathbf{u}_1$.
\end{proof}

We are now ready to construct an element of $\mathcal{S}$ from an element of $\mathcal{S}_1$ and an element of $\mathcal{S}_2$.

\begin{lemma}\label{lem:Backwards}
Let $\mathbf{u}\in \mathbb{C}^{|E|}$ be generic and define $\mathbf{u}_1\in \mathbb{C}^{|E_1|}$ and $\mathbf{u}_2\in\mathbb{C}^{|E_2|}$ as in Definitions \ref{def:M1CollapseLast} and \ref{def:M2CollapseFirst}. Let $(\hat{\mathbf{p}}_1, \hat{\mathbf{p}}_2)\in \mathcal{S}_1\times \mathcal{S}_2$ with coordinates, $\hat{p}_e$, of $\hat{\mathbf{p}}_1$ indexed by $e\in E_1$ and coordinates, $\hat{p}_e$, of $\hat{\mathbf{p}}_2$ indexed by $e\in E_2$. Let $\mathbf{p} \in \mathbb{C}^{|E|}$ with coordinates $p_e$ indexed in $E$ such that 

\begin{itemize}
    \item[(a)] $p_e=\hat{p}_{e}$ if $e\in \left(E_1\setminus \left(\bigcup_{i=1}^{k-1} E_{i*}\right)\right)\cup (E_2\setminus E_{* k})$ (these coordinates correspond to edges in $M$ within and between blocks $V_1,V_2,\dots, V_{k-1}$ or edges within block $V_k$),

    \item[(b)] and for $i\in [k-1]$, $v\in [n_i]$, and $w\in [n_k]$,
    \begin{equation*}\label{eqn:pijkm}
    p_{(i,v)(k,w)}:= \frac{\hat{p}_{(i,v)(*,1)}\hat{p}_{(*,1)(k,w)}}{\displaystyle\sum_{x=1}^{n_k} \hat{p}_{(*,1)(k,x)}}
    \end{equation*}
    (these coordinates correspond to edges in $M$ between block $V_k$ and another block). 
\end{itemize}
Then $\mathbf{p}\in\mathcal{S}$.
\end{lemma}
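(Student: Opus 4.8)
The plan is to check that $\mathbf{p}$, as constructed in (a)--(b), satisfies every equation of $L(M)$, namely the linear system $A\mathbf{p}=A\mathbf{u}$ together with all four families of quadratic binomials (within block, 3-1, 2-2, 2-1-1), reducing each such check to the corresponding equation of $L(M_1)$ or $L(M_2)$, which $\hat{\mathbf{p}}_1$ or $\hat{\mathbf{p}}_2$ is assumed to satisfy. First I would observe that $\mathbf{p}$ is well defined: the denominator $D:=\sum_{x=1}^{n_k}\hat p_{(*,1)(k,x)}$ in (b) equals $\sum_{x=1}^{n_k}u_{(*,1)(k,x)}=\sum_{x=1}^{n_k}\sum_{i=1}^{k-1}\sum_{v=1}^{n_i}u_{(i,v)(k,x)}$ by the $\beta_{(*,1)}$ row of $A_2\hat{\mathbf{p}}_2=A_2\mathbf{u}_2$ and \cref{def:M2CollapseFirst}, hence is a nonzero sum of coordinates of the generic point $\mathbf{u}$; and (a)--(b) assign a value to every edge of $E$, since $E$ is the disjoint union of the edge sets appearing in (a) and the edges between $V_k$ and the other blocks appearing in (b). Next I would record the two consequences of (b) obtained by summing: for every $i\in[k-1]$ and $v\in[n_i]$ one has $\sum_{w=1}^{n_k}p_{(i,v)(k,w)}=\hat p_{(i,v)(*,1)}$ (the factor $\sum_w\hat p_{(*,1)(k,w)}$ cancels $D$), and for every $w\in[n_k]$ one has $\sum_{i=1}^{k-1}\sum_{v=1}^{n_i}p_{(i,v)(k,w)}=\hat p_{(*,1)(k,w)}$, where this second identity is exactly the point at which \cref{lem:AnnoyingLittleEquation} enters.

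For the linear system I would proceed row by row, essentially reversing the computation of \cref{lem:p1p2Solutions}. Using part (a) and the first summation identity, the $\beta_{(i,v)}$ rows for $i\le k-1$ and the $\alpha_{i,j}$ rows for $i,j\le k-1$ become rows of $A_1\hat{\mathbf{p}}_1=A_1\mathbf{u}_1$; the $\alpha_{i,k}$ rows become the $\alpha_{i,*}$ rows of $A_1\hat{\mathbf{p}}_1=A_1\mathbf{u}_1$ (again the $\sum_w$ cancels $D$); the $\beta_{(k,w)}$ rows become the $\beta_{(k,w)}$ rows of $A_2\hat{\mathbf{p}}_2=A_2\mathbf{u}_2$ via the second summation identity; and the $\alpha_{k,k}$ row is literally a row of $A_2\hat{\mathbf{p}}_2=A_2\mathbf{u}_2$ by part (a). On the right-hand side, the corresponding rows of $A\mathbf{u}$ match those of $A_1\mathbf{u}_1$ and $A_2\mathbf{u}_2$ by the definitions of $u_{(i,v)(*,1)}$ and $u_{(*,1)(k,w)}$ in Definitions \ref{def:M1CollapseLast} and \ref{def:M2CollapseFirst}.

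For the quadratic binomials I would classify each move of $L(M)$ by how its four vertices sit relative to the contracted block $V_k$. If all four vertices lie outside $V_k$ (including all within-block moves in $V_1,\dots,V_{k-1}$), part (a) turns the binomial into a binomial of $L(M_1)$, which vanishes on $\hat{\mathbf{p}}_1$; if all four lie in $V_k$ it is a within-block binomial of $L(M_2)$, which vanishes on $\hat{\mathbf{p}}_2$. If exactly two vertices lie in $V_k$ --- the 2-2 moves with one pair in $V_k$, and the 2-1-1 moves whose ``2''-block is $V_k$ --- substituting (b) for all four edge variables makes the two monomials literally equal (each is $D^{-2}$ times a product of two factors $\hat p_{(\cdot)(*,1)}$ and two factors $\hat p_{(*,1)(k,\cdot)}$, with the two $V_k$-indices merely swapped), so the binomial is identically zero. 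If exactly one or exactly three vertices lie in $V_k$ --- the remaining 3-1 and 2-1-1 moves --- substituting (b) only for the edges meeting $V_k$ pulls out a single scalar $\hat p_{(*,1)(k,w)}/D$ (when one vertex is in $V_k$, with $w$ its $V_k$-index) or $\hat p_{(j,w)(*,1)}/D$ (when three vertices are in $V_k$), and what remains is precisely a 3-1 or 2-1-1 binomial of $L(M_1)$ or $L(M_2)$ in the variable $(*,1)$, which vanishes because $\hat{\mathbf{p}}_1\in\mathcal{S}_1$ or $\hat{\mathbf{p}}_2\in\mathcal{S}_2$.

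I expect the bookkeeping in this last step to be the main obstacle: one must confirm that every binomial of \cref{tab:binomials} for $M$ falls into exactly one of these cases, and, in the ``one or three in $V_k$'' case, that the leftover $\hat p$-binomial is genuinely one of the binomials in \cref{tab:binomials} for the contracted model --- i.e.\ that the ``2'', ``1'', and ``3'' roles of the blocks are matched correctly (for instance, a 3-1 move of $M$ with $V_k$ the singleton becomes a 3-1 move of $M_1$ with $V_*$ the singleton, while a 3-1 move of $M$ with $V_k$ the ``3''-block becomes a 3-1 move of $M_2$ with $V_*$ the singleton). Once the case list is pinned down, the algebra inside each case is a one-line substitution, and the conclusion $\mathbf{p}\in\mathcal{S}$ follows.
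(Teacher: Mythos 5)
Your proposal follows essentially the same route as the paper's proof: the same two summation identities (with \cref{lem:AnnoyingLittleEquation} entering exactly where you place it), the same row-by-row verification of $A\mathbf{p}=A\mathbf{u}$, and the same classification of the quadratics by how many of their four vertices lie in $V_k$ (zero or four reduce directly to $L(M_1)$ or $L(M_2)$, two give identically equal monomials, one or three factor into a scalar times a binomial of the contracted model). The case bookkeeping you flag as the main risk is carried out in the paper exactly as you describe, and your justification that the denominator is nonzero (via the $\beta_{(*,1)}$ row of $A_2\hat{\mathbf{p}}_2=A_2\mathbf{u}_2$) is, if anything, slightly more explicit than the paper's appeal to genericity.
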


\begin{proof}
Let $\mathbf{u}\in\mathbb{C}^{|E|}$ be generic, define $\mathbf{u}_1$ and $\mathbf{u}_2$ as in \cref{def:M1CollapseLast} and \cref{def:M2CollapseFirst}, let
$(\hat{\mathbf{p}}_1,\hat{\mathbf{p}}_2)\in\mathcal{S}_1\times\mathcal{S}_2$, and define $\mathbf{p}$ as in the statement of the lemma. For ease of notation, we let $P:=\left(\sum_{a=1}^{n_k} \hat{p}_{(*,1)(k,a)}\right)^{-1}$, allowing us to rewrite the definition in part (b) of the lemma as $p_{(i,v)(k,w)}:=\hat{p}_{(i,v)(*,1)}\hat{p}_{(*,1)(k,w)}P$ for $i\in[k-1]$, $v\in [n_i]$, $w\in[n_k]$. Note that since $\mathbf{u}$ is generic, we can assume $\sum_{a=1}^{n_k} \hat{p}_{(*,1)(k,a)} \neq 0$. We first  show  that $\mathbf{p}$ satisfies all quadratic equations in $L(M)$.

The quadratic equations in $L(M)$ within and between blocks $V_1,\dots, V_{k-1}$ are the same as the quadratic equations in $L(M_1)$ within and between blocks $V_1,\dots, V_{k-1}$, since $p_{(i,v)(j,u)} = \hat{p}_{(i,v)(j,u)}$ whenever $i,j\in[k-1]$ as defined in part (a) of the lemma. Similarly, the quadratic equations in $L(M)$ within block $V_k$ are the same as the quadratic equations in $L(M_2)$ within block $V_k$, since $p_{(k,u)(k,w)} = \hat{p}_{(k,u)(k,w)}$ as defined in part (a) of the lemma.

This leaves us to consider the quadratic equations in $L(M)$ that are between blocks $V_1,\dots,V_{k-1}$ and block $V_k$. We show that the coordinates of $\mathbf{p}$ satisfy all of these quadratic equations using the definition of $\mathbf{p}$ along with the fact that $\hat{\mathbf{p}}_1$ satisfies all quadratic equations in $L(M_1)$ and $\hat{\mathbf{p}}_2$ satisfies all quadratic equations in $L(M_2)$. 

We first consider the 3-1 quadratic equations in $L(M)$ with 3 vertices in $V_k$. Note that if $n_k<3$, there's nothing to prove. Let $i\in[k-1], v\in[n_i]$, and $a,b,c\in[n_k]$ distinct. Then 
\begin{eqnarray*}
p_{(i,v)(k,a)}p_{(k,b)(k,c)}&-&p_{(i,v)(k,b)}p_{(k,a)(k,c)}\\
& = & \left(\hat{p}_{(i,v)(*,1)}\hat{p}_{(*,1)(k,a)}P\right)\hat{p}_{(k,b)(k,c)}-\left(\hat{p}_{(i,v)(*,1)}\hat{p}_{(*,1)(k,b)}P\right)\hat{p}_{(k,a)(k,c)} \\
& = & \hat{p}_{(i,v)(*,1)}P\left(\hat{p}_{(*,1)(k,a)}\hat{p}_{(k,b)(k,c)}-\hat{p}_{(*,1)(k,b)}\hat{p}_{(k,a)(k,c)}\right), 
\end{eqnarray*}
where the first equality is a substitution using the definition of $p_e$ in the statement of the lemma, while the second comes from factoring to find a 3-1 binomial from $L(M_2)$, which is necessarily zero.

Now we consider the 3-1 and 2-1-1 quadratic equations in $L(M)$ with a single vertex in block $V_k$. Let $i,j\in[k-1]$, $a,b\in[n_i]$, and $c\in[n_j]$, and notice that by definition of $p_e$ from the lemma statement,
\begin{equation}\label{eqn:31211Binomials}
\begin{split}
p_{(i,a)(k,d)}p_{(i,b)(j,c)}-p_{(i,a)(j,c)}p_{(i,b)(k,d)} & = \left(\hat{p}_{(i,a)(*,1)}\hat{p}_{(*,1)(k,d)}P\right)\hat{p}_{(i,b)(j,c)}-\hat{p}_{(i,a)(j,c)}\left(\hat{p}_{(i,b)(*,1)}\hat{p}_{(*,1)(k,d)}P\right) \\
& =  \hat{p}_{(*,1)(k,d)}P\left(\hat{p}_{(i,a)(*,1)}\hat{p}_{(i,b)(j,c)}-\hat{p}_{(i,a)(j,c)}\hat{p}_{(i,b)(*,1)}\right). 
\end{split}
\end{equation}
When $i=j$ and $a,b,$ and $c$ are distinct, the left hand side of \cref{eqn:31211Binomials} is a 3-1 binomial with a single vertex in block $V_k$ from $L(M)$, and it must be 0 since the right hand side involves multiplication by a 3-1 binomial from $L(M_1)$. When $i\neq j$ and $a\neq b$, the left hand side of \cref{eqn:31211Binomials} is a 2-1-1 binomial with a single vertex in block $V_k$ from $L(M)$, and it must be 0 since the right hand side involves multiplication by a 2-1-1 binomial from $L(M_2)$.

Finally, consider the 2-1-1 and 2-2 quadratic equations from $L(M)$ with two vertices in block $V_k$. Let $i,j\in[k-1]$, $v\in[n_i]$, $u\in[n_j]$, and $a,b\in[n_k]$ with $a\neq b$ (if distinct $a$ and $b$ do not exist, there are no quadratic equations of these two types, and, thus, there is nothing to prove), and consider $p_{(i,v)(k,a)}p_{(j,u)(k,b)}-p_{(i,v)(k,b)}p_{(j,u)(k,a)}$. When $i=j$ and $v\neq u$, this is a 2-2 binomial with two vertices in block $V_k$ from $L(M)$ and when $i\neq j$, it is a 2-1-1 binomial with two vertices in block $V_k$. In both cases, one can see that it is zero by applying the definition of $p_e$ from the statement of the lemma and rearranging the first term to yield two identical terms. We have now shown that $\mathbf{p}$ satisfies all quadratic equations in $L(M)$. 

Now we will show that $A\mathbf{p}=A\mathbf{u}$. We will need the following:
First, let $i\in[k-1]$ and $v\in [n_i]$. Then by definition of $\mathbf{p}$, we have
\begin{equation}\label{eqn:psum1}
\sum_{w=1}^{n_k} p_{(i,v)(k,w)}=\sum_{w=1}^{n_k}\left(\hat{p}_{(i,v)(\ast,1)}\hat{p}_{(\ast,1)(k,w)}P\right)=\hat{p}_{(i,v)(\ast,1)}\left(\sum_{w=1}^{n_k}\hat{p}_{(\ast,1)(k,w)}\right)P=\hat{p}_{(i,v)(\ast,1)}P^{-1}P=\hat{p}_{(i,v)(\ast,1)}.
\end{equation} 
Similarly, for $w\in [n_k]$, the definition of $\mathbf{p}$ and \cref{lem:AnnoyingLittleEquation} yield 
\begin{equation}\label{eqn:psum2}
\sum_{i=1}^{k-1}\sum_{v=1}^{n_i} p_{(i,v)(k,w)}=\sum_{i=1}^{k-1}\sum_{v=1}^{n_i}\left(\hat{p}_{(i,v)(\ast,1)}\hat{p}_{(\ast,1)(k,w)}P\right)=\left(\sum_{i=1}^{k-1}\sum_{v=1}^{n_i}\hat{p}_{(i,v)(\ast,1)}\right)\hat{p}_{(\ast,1)(k,w)}P=\hat{p}_{(\ast,1)(k,w)}
\end{equation}
where the last equality holds because the expression in parentheses is $P^{-1}$ by \cref{lem:AnnoyingLittleEquation}.
Now we will show that $A\mathbf{p}=A\mathbf{u}$. First let $i\in [k-1]$ and $v\in[n_i]$. Since $\hat{\mathbf{p}}_1\in\mathcal{S}_1$, we must have $A_1\hat{\mathbf{p}}_1=A_1\mathbf{u}_1$. In particular, we must have the $\beta_{(i,v)}$ row of this equation (from $L(M_1)$), $\sum_{j=1}^{k-1}\sum_{u=1}^{n_j}\hat{p}_{(i,v)(j,u)}+\hat{p}_{(i,v)(\ast,1)}=\sum_{j=1}^{k-1}\sum_{u=1}^{n_j}u_{(i,v)(j,u)}+u_{(i,v)(\ast,1)}$. Applying \cref{eqn:psum1} along with the definitions of $u_{(i,v)(\ast,1)}$ and $\mathbf{p}$ yields
\[
\sum_{j=1}^{k-1}\sum_{u=1}^{n_j}{p}_{(i,v)(j,u)}+\sum_{w=1}^{n_k}p_{(i,v)(k,w)}=\sum_{j=1}^{k-1}\sum_{u=1}^{n_j}u_{(i,v)(j,u)}+\sum_{w=1}^{n_k}u_{(i,v)(k,w)},
\]
which is the $\beta_{(i,v)}$ row of $A\mathbf{p}=A\mathbf{u}$. Now let $w\in[n_k]$. Then since $\hat{\mathbf{p}}_2\in\mathcal{S}_2$, we must have $A_2\hat{\mathbf{p}}_2=A_2\mathbf{u}_2$, so we must have the $\beta_{(k,w)}$ row of this equation (from $L(M_2)$), $\hat{p}_{(\ast,1)(k,w)}+\sum_{a=1}^{n_k}\hat{p}_{(k,a)(k,w)}=u_{(\ast,1)(k,w)}+\sum_{a=1}^{n_k}u_{(k,a)(k,w)}$. By applying \cref{eqn:psum2} and the definitions of $u_{(\ast,1)(k,w)}$ and $\mathbf{p}$, we obtain the $\beta_{(k,w)}$ row of $A\mathbf{p}=A\mathbf{u}$,
\[
\sum_{i=1}^{k-1}\sum_{v=1}^{n_i}p_{(i,v)(k,w)}+\sum_{a=1}^{n_k}p_{(k,a)(k,w)}=\sum_{i=1}^{k-1}\sum_{v=1}^{n_i}u_{(i,v)(k,w)}+\sum_{a=1}^{n_k}u_{(k,a)(k,w)}.
\]
We now have all of the $\beta$ rows of $A\mathbf{p}=A\mathbf{u}$ and just need to address the $\alpha$ rows. First, let $i,j\in[k-1]$ and notice that $\mathbf{p}$ must satisfy the $\alpha_{i,j}$ row of $A\mathbf{p}=A\mathbf{u}$ since this equation is the same as the equation arising from the $\alpha_{i,j}$ row of $A_1\hat{\mathbf{p}}_1=A_1\mathbf{u}_1$ once we use that $p_{(i,v)(j,w)}=\hat{p}_{(i,v)(j,w)}$ for all $v\in n_i$, $w\in n_j$. Similarly, the $\mathbf{p}$ must satisfy the $\alpha_{k,k}$ row of $A\mathbf{p}=A\mathbf{u}$ since this equation is the same as the $\alpha_{k,k}$ row of $A_2\hat{\mathbf{p}}_2=A_2\mathbf{u}_2$ since $p_{(k,a)(k,b)}=\hat{p}_{(k,a)(k,b)}$ for all $a,b\in[n_k]$. 

Let $i\in[k-1]$. We must show $\mathbf{p}$ satisfies the $\alpha_{i,k}$ row of $A\mathbf{p}=A\mathbf{u}$. The $\alpha_{i,\ast}$ row of $A_1\hat{\mathbf{p}}_1=A_1\mathbf{u}_1$ is $\sum_{v=1}^{n_i} \hat{p}_{(i,v)(\ast,1)}=\sum_{v=1}^{n_i} u_{(i,v)(\ast,1)}$. \cref{eqn:psum1} and the definition of $u_{(i,v)(\ast,1)}$ yields $\sum_{v=1}^{n_i}\sum_{w=1}^{n_k}p_{(i,v)(k,w)}\\=
\sum_{v=1}^{n_i}\sum_{w=1}^{n_k}u_{(i,v)(k,w)}$, which is the $\alpha_{i,k}$ row of $A\mathbf{p}=A\mathbf{u}$. Thus, we've shown that $\mathbf{p}\in\mathcal{S}$.
\end{proof}

\begin{lemma}\label{lem:phiBijection}
The map $\phi:\mathcal{S}\rightarrow\mathcal{S}_1\times\mathcal{S}_2$ defined in \cref{def:phidef} is a bijection. 
\end{lemma}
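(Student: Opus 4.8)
The plan is to exhibit an explicit two‑sided inverse of $\phi$, namely the assignment of \cref{lem:Backwards}: define $\psi:\mathcal{S}_1\times\mathcal{S}_2\to\mathcal{S}$ by sending $(\hat{\mathbf p}_1,\hat{\mathbf p}_2)$ to the point $\mathbf p$ that copies the coordinates of $\hat{\mathbf p}_1,\hat{\mathbf p}_2$ on the edges shared with $M$ and sets $p_{(i,v)(k,w)}=\hat p_{(i,v)(*,1)}\hat p_{(*,1)(k,w)}\big/\sum_{x}\hat p_{(*,1)(k,x)}$ on the cross edges. By \cref{lem:Backwards}, $\psi$ does land in $\mathcal{S}$, and by \cref{lem:p1p2Solutions}, $\phi$ is well defined; so the whole content is to check $\phi\circ\psi=\mathrm{id}_{\mathcal{S}_1\times\mathcal{S}_2}$ and $\psi\circ\phi=\mathrm{id}_{\mathcal{S}}$, after which injectivity and surjectivity of $\phi$ are immediate.

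For $\phi\circ\psi=\mathrm{id}$ I would fix $(\hat{\mathbf p}_1,\hat{\mathbf p}_2)$, set $\mathbf p=\psi(\hat{\mathbf p}_1,\hat{\mathbf p}_2)$ and $(\mathbf p_1,\mathbf p_2)=\phi(\mathbf p)$, and compare coordinates. On edges common to $M$ and $M_1$ (respectively $M$ and $M_2$), the three relevant points carry the same value by construction. On the contracted edges one invokes the sum identities already proved inside \cref{lem:Backwards}: $(\mathbf p_1)_{(i,v)(*,1)}=\sum_w p_{(i,v)(k,w)}=\hat p_{(i,v)(*,1)}$ by \cref{eqn:psum1}, and $(\mathbf p_2)_{(*,1)(k,w)}=\sum_{i,v}p_{(i,v)(k,w)}=\hat p_{(*,1)(k,w)}$ by \cref{eqn:psum2}. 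Hence $(\mathbf p_1,\mathbf p_2)=(\hat{\mathbf p}_1,\hat{\mathbf p}_2)$.

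For $\psi\circ\phi=\mathrm{id}$ I would fix $\mathbf p\in\mathcal{S}$, set $(\mathbf p_1,\mathbf p_2)=\phi(\mathbf p)$ and $\mathbf q=\psi(\mathbf p_1,\mathbf p_2)$. On the shared edges $\mathbf q=\mathbf p$ by construction, so the only thing to prove is that for $i\in[k-1]$, $v\in[n_i]$, $w\in[n_k]$,
\[
p_{(i,v)(k,w)}\cdot\sum_{i',v',x}p_{(i',v')(k,x)}=\Big(\sum_{w'}p_{(i,v)(k,w')}\Big)\Big(\sum_{i',v'}p_{(i',v')(k,w)}\Big),
\]
since the right‑hand factors are $p_{(i,v)(*,1)}$ and $p_{(*,1)(k,w)}$ and the left‑hand sum is $\sum_x p_{(*,1)(k,x)}$, which is nonzero by genericity of $\mathbf u$ (as recorded in \cref{lem:Backwards}). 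Rearranging, this is the identity $\sum_{i',v',x}\big(p_{(i,v)(k,w)}p_{(i',v')(k,x)}-p_{(i,v)(k,x)}p_{(i',v')(k,w)}\big)=0$, and each summand vanishes: trivially when $(i',v')=(i,v)$ or $x=w$, and otherwise it is, up to sign, a $2\times2$ minor of the cross‑edge array $\big(p_{(i,v)(k,w)}\big)$ with rows indexed by $V_1\cup\cdots\cup V_{k-1}$ and columns by $[n_k]$ — a $2$‑$2$ binomial of $L(M)$ when $i=i'$ and a $2$‑$1$‑$1$ binomial of $L(M)$ when $i\neq i'$. Since $\mathbf p\in\mathcal{S}$ satisfies all of these, the sum is $0$, so $\mathbf q=\mathbf p$.

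The one nonroutine step is this last identity; everything else is the bookkeeping already packaged in \cref{lem:p1p2Solutions,lem:AnnoyingLittleEquation,lem:Backwards}. Conceptually it says that the cross‑edge array of any $\mathbf p\in\mathcal S$ has rank at most one and is therefore reconstructed from its row and column marginals, and the precise input making this true is exactly the catalog of quadratic moves linking vertices in different "super‑blocks" (the $2$‑$2$ and $2$‑$1$‑$1$ binomials of \cref{tab:binomials}). Once $\phi$ is shown to be a bijection, \cref{lem:FactoringLemma} follows at once, since $|\mathcal{S}|=|\mathcal{S}_1|\cdot|\mathcal{S}_2|=\MLdeg(n_1,\dots,n_{k-1},1)\cdot\MLdeg(n_k,1)$.
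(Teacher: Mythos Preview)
Your proof is correct and takes essentially the same approach as the paper. The paper proves injectivity and surjectivity separately rather than packaging the argument as a two-sided inverse, but the substance is identical: your $\phi\circ\psi=\mathrm{id}$ step is exactly the paper's surjectivity argument (invoking \cref{eqn:psum1} and \cref{eqn:psum2}), and your $\psi\circ\phi=\mathrm{id}$ step is exactly the paper's injectivity argument, which sums the same family of $2$--$2$ and $2$--$1$--$1$ binomials over the cross edges to recover the rank-one formula $p_{(i,v)(k,w)}=p_{(i,v)(*,1)}p_{(*,1)(k,w)}/\sum_x p_{(*,1)(k,x)}$.
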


\begin{proof}
Let $\mathbf{u}\in \mathbb{C}^{|E|}$ and define $\mathbf{u}_1\in \mathbb{C}^{|E_1|}$ and $\mathbf{u}_2\in\mathbb{C}^{|E_2|}$ as in Definitions \ref{def:M1CollapseLast} and \ref{def:M2CollapseFirst}. Define $\phi$ as in \cref{def:phidef}. Let $\mathbf{p},\mathbf{q}\in\mathcal{S}$, and assume $(\mathbf{p}_1,\mathbf{p}_2)=\phi(\mathbf{p})=\phi(\mathbf{q})=(\mathbf{q}_1,\mathbf{q}_2)$. Since $\mathbf{p}_1=\mathbf{q}_1$ and $\mathbf{p}_2=\mathbf{q}_2$, $p_e=q_e$ for all $e\in E_1 \cup E_2$. Thus, it remains to show that $p_e=q_e$ for $e\in E\setminus (E_1\cup E_2)$. Let $i\in[k-1]$, $v\in [n_i]$, $w\in[n_k]$, and let $e=(i,v)(k,w)\in E_{ik}$. Since $\mathbf{p},\mathbf{q}\in\mathcal{S}$, both must satisfy all quadratic equations of $L(M)$, and we will argue that this implies that they must satisfy
\begin{equation}\label{eqn:pBinomial}
p_{(i,v)(k,w)}p_{(b,c)(k,a)}-p_{(i,v)(k,a)}p_{(b,c)(k,w)}=0
\end{equation}
\begin{equation}\label{eqn:qBinomial}
q_{(i,v)(k,w)}q_{(b,c)(k,a)}-q_{(i,v)(k,a)}q_{(b,c)(k,w)}=0
\end{equation}
for all $b\in[k-1]$, $c\in[n_b]$, and $a\in[n_k]$. 

Let $k=2$. Then  $i=b=1$, so when $a\neq w$ and $c\neq v$, \cref{eqn:pBinomial} and \cref{eqn:qBinomial} are 2-2 quadratic equations of $L(M)$ and must hold. If $a=w$ or $c=v$ then each binomial is identically 0, and so the $k=2$ case is complete.

Let $k>2$. If $a\neq w$ and $b\neq i$, \cref{eqn:pBinomial} and \cref{eqn:qBinomial} are 2-1-1 quadratic equations of $L(M)$ and must hold. When $a\neq w$, $b=i$, and $c\neq v$, these equations become 2-2 quadratic equations of $L(M)$ and must hold. When $a=w$, or both $b=i$ and $c=v$, then each binomial is identically 0, and Equations \ref{eqn:pBinomial} and \ref{eqn:qBinomial} hold when $k>2$ for all $b\in[k-1]$, $c\in[n_b]$, and $a\in[n_k]$. Therefore, we can sum over $a, b$, and $c$ to obtain
\[
\sum_{a=1}^{n_k}\left(\sum_{b=1}^{k-1}\left(\sum_{c=1}^{n_b}\left(p_{(i,v)(k,w)}p_{(b,c)(k,a)}-p_{(i,v)(k,a)}p_{(b,c)(k,w)}\right)\right)\right)=0
\]
\[
\Longrightarrow p_{(i,v)(k,w)}\left(\sum_{a=1}^{n_k}\left(\sum_{b=1}^{k-1}\left(\sum_{c=1}^{n_b}p_{(b,c)(k,a)}\right)\right)\right)-\left(\sum_{a=1}^{n_k} p_{(i,v)(k,a)}\right)\left(\sum_{b=1}^{k-1}\left(\sum_{c=1}^{n_b} p_{(b,c)(k,w)}\right)\right)=0
\]
\[
\Longrightarrow p_{(i,v)(k,w)}\left(\sum_{a=1}^{n_k}p_{(*,1)(k,a)}\right)-p_{(i,v)(*,1)}p_{(*,1)(k,w)}=0
\Longrightarrow p_{(i,v)(k,w)}=\frac{p_{(i,v)(*,1)}p_{(*,1)(k,w)}}{\sum_{a=1}^{n_k}p_{(*,1)(k,a)}},
\]
where the denominator is non-zero for generic $\mathbf{u}$. The same computation with \cref{eqn:qBinomial} yields
\[
p_{(i,v)(k,w)}=\frac{p_{(i,v)(*,1)}p_{(*,1)(k,w)}}{\sum_{a=1}^{n_k}p_{(*,1)(k,a)}}=\frac{q_{(i,v)(*,1)}q_{(*,1)(k,w)}}{\sum_{a=1}^{n_k}q_{(*,1)(k,a)}}=q_{(i,v)(k,w)}
\]
where the center equality holds because all indices involved in the two expressions are in $E_{* k}\cup \left( \bigcup_{i=1}^{k-1} E_{i*} \right) \subseteq E_1\cup E_2$ and we know $p_e=q_e$ for $e\in E_1\cup E_2$. Thus, $p_e=q_e$ for all $e\in  \bigcup_{i=1}^{k-1} E_{ik}$. Since $E \subseteq E_1\cup E_2 \cup \left( \bigcup_{i=1}^{k-1} E_{ik} \right)$, $p_e=q_e$ for all $e\in E$. Thus, $\mathbf{p}=\mathbf{q}$, and $\phi$ is injective.

Now we show $\phi$ is surjective. Let $(\mathbf{\hat{p}}_1,\mathbf{\hat{p}}_2)\in \mathcal{S}_1\times\mathcal{S}_2$. Consider the vector $\mathbf{p}\in\mathcal{S}$ as defined in \cref{lem:Backwards}. By definition of $\mathbf{p}$, $p_e=\hat{p}_e$ for $e\in (E_1\setminus \left(\bigcup_{i=1}^{k-1}E_{i*}\right))\cup(E_2\setminus E_{* k})$. Therefore, for coordinates $p_e$ such that $e\in (E_1\setminus \left(\bigcup_{i=1}^{k-1}E_{i*}\right))$, $\mathbf{p}_1=\mathbf{\hat{p}}_1$ and for coordinates $p_e$ such that $e\in (E_2\setminus E_{* k})$, $\mathbf{p}_2=\mathbf{\hat{p}}_2$. 

Now suppose $e\in \left(\bigcup_{i=1}^{k-1}E_{i*}\right)$, that is, $e=(i,v)(*,1)$ for some $i\in[k-1], v\in [n_i]$. Then by using \cref{eqn:psum1} and the definition of $\phi$, we see that the coordinate of $\mathbf{p}_1$ indexed by $e$ is $p_e=p_{(i,v)(*,1)}=\sum_{a=1}^{n_k} p_{(i,v)(k,a)}=\hat{p}_{(i,v)(\ast,1)}=\hat{p}_e$, so $\mathbf{p}_1=\mathbf{\hat{p}}_1$. Now let $e\in E_{* k}$, that is, $e=(*,1)(k,w)$ for some $w\in[n_k]$. A similar argument as above, using \cref{eqn:psum2} shows $p_e=\hat{p}_e$, so $\mathbf{p}_2=\mathbf{\hat{p}}_2$. Thus, $\phi(\mathbf{p})=(\mathbf{p}_1,\mathbf{p}_2)=(\mathbf{\hat{p}}_1,\mathbf{\hat{p}}_2)$, so $\phi$ is a surjection and hence, a bijection.
\end{proof} 

Now we are ready to prove the Factoring Lemma.

\begin{proof}[Proof of \cref{lem:FactoringLemma} (Factoring Lemma):]\,

Consider the $\beta$-SBM $\mathcal{M}(n_1,n_2,\dots,n_k)$ such that $k>1$. By \cref{lem:phiBijection} the map $\phi:\mathcal{S}\rightarrow\mathcal{S}_1\times\mathcal{S}_2$ defined in \cref{def:phidef} is a bijection. Thus, $|\mathcal{S}|=|\mathcal{S}_1||\mathcal{S}_2|$, so 
\[
\MLdeg(n_1,n_2,\dots,n_k)=\MLdeg(n_1,n_2,\dots,n_{k-1},1)\MLdeg(n_k,1).
\]
\end{proof}

\section{Acknowledgements}
This work is a product of the authors' participation in the 2023 Research Experiences for Undergraduate Faculty (REUF) program, hosted and funded by the Institute for Computational and Experimental Research in Mathematics (ICERM) with support from National Science Foundation grant DMS-2015375. The authors received additional support through a 2024 REUF continuation workshop at the American Institute of Mathematics (AIM), funded by NSF grant DMS-2015462. EG was supported by the National Science Foundation grant DMS-1945584. We thank Scott Greenhalgh for helpful discussions.

\bibliographystyle{plainurl}
\bibliography{refs}

\end{document}